\theoremstyle{plain}
\newtheorem{thm}{Theorem}
\newtheorem{prop}[thm]{Proposition}
\newtheorem{lem}[thm]{Lemma}
\newtheorem{cor}[thm]{Corollary}
\theoremstyle{definition}
\newtheorem{defn}[thm]{Definition}
\theoremstyle{remark}
\newtheorem*{rem}{Remark}
\newcommand{\Real}{\mathbf R}
\newcommand{\C}{\mathbf C}
\newcommand{\G}{\mathbf G}
\newcommand{\Q}{\mathbf Q}
\newcommand{\Z}{\mathbf Z}
\newcommand{\A} {\mathbf{A}}
\newcommand{\GO} {\rm{GO}}
\begin{document}
\title{Arithmetic properties of similitude theta lifts from orthogonal to symplectic
groups}


\author{Tobias Berger}

\date{February 26, 2008}



\maketitle
\large\begin{center}
    Preliminary Version
\end{center}

\normalsize

\begin{abstract}
By adapting the work of Kudla and Millson we obtain a lifting of cuspidal
cohomology classes for the symmetric space associated to ${\rm GO}(V)$ for an indefinite rational quadratic space $V$ of even dimension to holomorphic Siegel modular forms on ${\rm
GSp}_{n}(\A)$. For $n=2$ we prove Thom's Lemma for hyperbolic 3-space, which together with results of Kudla and Millson imply an
interpretation of the Fourier coefficients of the theta lift as period integrals of
the cohomology class over certain cycles. This allows us to prove the $p$-integrality of the lift for a particular choice of Schwartz function for almost all primes $p$.
We further calculate the Hecke eigenvalues (including for some ``bad" places) for this choice in the case of $V$ of signature $(3,1)$.
\end{abstract}

\section{Introduction}
The theta correspondence provides an important method of transfering automorphic forms between different groups. It was Shimura who started studying the arithmeticity of theta correspondences and its applications to the arithmetic of period ratios and special $L$-values. Harris and Kudla extended this work and in certain cases proved rationality of the theta lift over specified number fields (see e.g. \cite{HK92}). More recently, Prasanna proved $p$-integrality of the Jacquet-Langlands-Shimizu correspondence \cite{P06} and the Shimura and Shintani correspondences \cite{P08b}. We refer the reader to \cite{P08} for an overview of these and other results by Finis, Harris-Li-Skinner, Emerton, and Hida.
In this paper we study the $p$-integrality of the theta lift from orthogonal similitude groups of orthogonal spaces of signature $(s,t)$ with $s>t$ odd to symplectic similitude groups. The case of signature $(3,1)$ is of particular interest because of its connection to ${\rm GL}_2$ over an imaginary quadratic field (cf. \cite{HST}).

As a first step  we explain in Section \ref{stheta} how to adapt the work of Kudla and Millson
\cite{KM86}, \cite{KM87}, \cite{KM88}, \cite{KM90} on cohomological
theta lifts for isometry groups to the similitude case.
 We obtain a lifting of cuspidal
cohomology classes for the symmetric space associated to a group of orthogonal similitudes
to a holomorphic Siegel modular form on ${\rm
GSp}_n(\A)$. The results of Kudla and Millson further imply an
interpretation of its Fourier coefficients as period integrals of
the cohomology class over certain cycles. Since infinite geodesics
were not treated by Kudla and Millson we provide a proof for these
in the case of hyperbolic $3$-space in Section \ref{thomh3}. By choosing arithmetic (i.e. algebraic, rational or $p$-integral) Schwartz
functions our result allows a lifting to arithmetic Siegel modular
forms (see Section \ref{schwartz}). Here we take  as rational (or $p$-integral) structures the one coming from Betti cohomology for the orthogonal group, whilst we use the Fourier expansion for the Siegel modular form.

In Section \ref{Hecke} we calculate the Hecke action on our similitude theta lift, including for some ``bad places" dividing the level. Previously, the only calculations of the Hecke action on symplectic theta series for indefinite quadratic forms had been for signature $(2,1)$ (Waldspurger and Prasanna \cite{P08b}). One of our motivations was to obtain a $p$-integral lifting of ordinary cuspforms $\pi$ for
${\rm GL}_2$ over an imaginary quadratic field to an ordinary Siegel form, which would allow one to study $\pi$ by variational techniques using Hida families for ${\rm GSp}_2$. Combined with the Jacquet-Langlands correspondence our results should provide such a lifting for forms with unramified or Steinberg local components $\pi_v$ for $v \mid p$. We plan to investigate this and the question of non-vanishing of the theta lift in further work.

Our results carry over, mutatis mutandis, to totally real number fields and quadratic spaces of signature $((s,t),(m,0), \ldots, (m,0))$. Using the work of Funke and Millson \cite{FM06} it should be
possible to extend our results also to higher weights, i.e. the case of
cohomology classes of the similitude orthogonal group with non-trivial
coefficients and vector-valued Siegel modular forms.

The author would like to thank Chris Skinner for his suggestion to study the arithmetic of the theta lift used in the work of Harris, Soudry and Taylor \cite{HST}, and Jim Cogdell for pointing him towards the work of Kudla and Millson. Thanks also to Stephen Kudla  and Jens Funke for helpful discussions and answering questions about their work.

\section{Notation and terminology}

\subsection{Orthogonal Groups} \label{orthdefn}
Let $(V, ( \, , \,))$ be a nondegenerate quadratic space over $\Q$ of
even dimension $m$ and signature $(s,t)$ with $s > t$ and $t$ odd. Let $\chi_V(x)=(x,(-1)^{m/2} {\rm det} V):\Q^*\backslash \A^* \to \{ \pm 1\}$ be the quadratic Hecke character associated to $V$. Let
$H=\GO(V)$ denote the group of orthogonal similitudes of $V$ and let
$\lambda:H \to \G_m$ denote the multiplier character. Put $H_1={\rm
O}(V)={\rm ker}(\lambda)$. Write $H_*$ for $*=\emptyset,1$.

 We let $H_*(\Real)_+$ be the group of elements of
$H_*(\Real)$ whose image in $H_*^{\rm ad}(\Real)$ lies in its
identity component $H_*^{\rm ad}(\Real)^+$, and we let
$H_*(\Q)_+=H_*(\Q) \cap H_*(\Real)_+$. Put $K_{\infty}={\rm SO}(s) \times
{\rm SO}(t) \subset H_1(\Real)$.

Let $D$ be the symmetric space of dimension $d:=st$ given by the space of negative $t$-planes in $V(\Real)$, i.e. $$D=\{Z \in {\rm Gr}_t(V(\Real)):(\, , \, )|_Z <0 \}.$$
For an open
compact subgroup $K \subset H_*(\A_f)$ put
$$S^{*}_K=H_*(\Q)_+\backslash D \times H_*(\A_f)/K.$$ The connected components
of $S^{*}_K$ can be described as follows. Write
\begin{equation} \label{gj} H_*(\A_f)=\coprod_j H_*(\Q)_+ h_j
K\end{equation} for $h_j \in H_*(\A_f)$. Then $$S^*_K= \coprod_j
\Gamma_{j} \backslash D, $$ where $\Gamma_j$ is the image in
$H_*^{\rm ad}(\Real)^+$ of \begin{equation}\label{gammaj}
\Gamma_j'=H_*(\Q)_+ \cap h_j K h_j^{-1}.\end{equation}

Note that \begin{equation} \label{spacedecomp} \Omega^n(S^*_K) \cong
[\Omega^n(D) \otimes C^{\infty}(H_*(\A_f))]^{H_*(\Q)_+ \times K}
\cong \bigoplus_j \Omega^n(D)^{\Gamma_j},\end{equation} where the
second isomorphism is obtained by evaluation at $h_j$.

Let $\frak{g}$ be the Lie algebra of $H_1(\Real)$, and $\frak{k}$
that of $K_{\infty}$. We then have the Cartan decomposition
$\frak{g}=\frak{k}+\frak{p}$ with $\frak{p}$ the orthogonal
complement of $\frak{k}$ with respect to the Killing form.

\subsection{Symplectic groups} \label{sympldefn}
For $n \geq 1$ let $G={\rm GSp}(n) \subset {\rm GL}_{2n}$ and $G_1={\rm Sp}_n$. We also write $\lambda$
for the multiplier of $G$. For $g\in G(\A)$ put $g_1=g
\begin{pmatrix} 1 & 0
\\0& \lambda(g)^{-1}\end{pmatrix} \in G_1(\A)$. Let $G(\A)^+$ be the
subgroup of $g\in G(\A)$ such that $\lambda(g) \in \lambda(H(\A))$ and $G_+(\Real)$ the elements of $G(\Real)$ with positive multiplier.

Let $L_{\infty}\cong U(n)$  be the maximal compact subgroup of $G_1(\Real)$. Let
$\frak{g}'=\frak{k}' + \frak{p}'$ be the complexified Cartan
decomposition of $\frak{sp}_n$. Recall that $\mathbf{H}_n=\{\tau \in {\rm Sym}_n(\C): {\rm Im}(\tau) >0\} \cong G_1(\Real)/L_{\infty}$. We can identify $\frak{p}'$ with the
complex tangent space of $\mathbf{H}_n$ at the point $i {\rm 1}_n$, and the Harish-Chandra
decomposition $\frak{p}'=\frak{p}_+ \oplus \frak{p}_{-}$ gives the
splitting of $\frak{p}'$ into the holomorphic and anti-holomorphic
tangent spaces. Put $\frak{q}=\frak{k}' \oplus \frak{p}_-$.

For $L\subset G_1(\A_f)$ a compact open subgroup denote the
corresponding Shimura variety by
$$M_L=G_1(\Q) \backslash \mathbf{H}_n \times G_1(\A_f)/L.$$

\section{Theta lift} \label{stheta}

\subsection{Isometry theta lift}
Fix the nondegenerate additive character $\psi=\prod_{\ell} \psi_{\ell}$ of $\A$
trivial on $\Q$ given by $\psi_{\infty}(x)={\rm exp}(2 \pi i x)$ and, for every rational prime $\ell$, $\Psi_{\ell}(x)={\rm exp}(2 \pi i {\rm Fr}_{\ell}(x))$ for $x \in \Q_{\ell}$, where ${\rm Fr}_{\ell}(x)$ denotes the fractional part of $x$. Let
$1\leq n \leq s$. Let $\omega=\omega_{\psi}$ denote the usual
actions of $H_1(\A)$ and $G_1(\A)$ on the Schwartz-Bruhat space
$S(V(\A)^n)$ of $V(\A)^n$ (see, for example, \cite{Ro01} \S 1).

Let $K \subset H_1(\A_f)$ be a compact open subgroup. Denote by
$S(V(\A)^n)^K$ the $K$-invariant Schwartz functions. For $g \in G_1(\A), h \in H_1(\A_f)$ and $\phi \in [S(V(\A^n))^K \otimes
\Omega^{d-i}(D)]^{H_1(\Real)}$ defining a closed $d-i$-form on $D$
let
\begin{equation} \label{theta}\theta(g,h, \phi)=\sum_{x \in V(\Q)^n} \omega(g, h)  \phi(x).\end{equation}
This defines a closed $d-i$-form $\theta(g,\phi)$ on $S^1_K$.

Following \cite{KM90} we define a pairing
$$H^i_c(S^1_{K},\C) \times H^{d-i}_{\rm ct}(H_1(\Real),S(V(\A)^n)^K)\to
C^{\infty}(G(\A))$$ by \begin{equation} \label{KMeqn} \langle
[\eta],[\phi]\rangle_{K}(g)=\int_{S^1_{K}} \eta \wedge
\theta(g,\phi).\end{equation} Here we take as measure on $S^1_K$ the one induced from the measure $dh_1$ on $H_1(\Q) \backslash H_1(\A)$ defined on p. 83 of \cite{HK92}. We recall the following notation
from \cite{KM90}: If $\frak{m}$ is a Lie algebra, $\chi: \frak{m}
\to \C$ a homomorphism and $U$ and $\frak{m}$-module, then
$$U^{\frak{m}}_{\chi}=\{ u \in U: x.u=\chi(x) u \text{ for all } x
\in \frak{m}\}.$$ We denote the subspace of \textit{holomorphic}
Schwartz classes, i.e. the classes annihilated by $\frak{p}_-$, by
$$H^{d-i}_{\rm ct}(H_1(\Real),S(V(\A)^n))^{\frak{p}_-}.$$

For holomorphic automorphic forms in the adelic setting we refer the
reader to \cite{Ha84} Section 2. Let $\chi_m$ denote the character
of $L_{\infty}$ given by $\chi_m(k)={\rm det}(k)^{m/2}$. For
$L\subset G_1(\A_f)$ compact open let $L_{\infty} L$ act on
$(G_1(\Q) \backslash G_1(\A)) \times \C_{\chi_m}$ by $$(g,v)\cdot
\ell=(g \ell, \chi_m(\ell_{\infty}^{-1}) v)$$ for $g \in G_1(\Q)
\backslash G_1(\A)$ and $\ell=(\ell_{\infty}, \ell_f) \in L_{\infty}
\times L$. The projection $G_1(\Q) \backslash G_1(\A) \to M_L$
identifies the quotient $G_1(\Q) \backslash G_1(\A) \times
\C_{\chi_m}/L_{\infty} L$ with a holomorphic vector bundle
$\mathcal{L}_m$ over $M_L$. We are going to identify global sections
of $\mathcal{L}_m$ with the holomorphic automorphic forms
$\mathcal{H}_{\chi_m}(G_1(\Q) \backslash G_1(\A)/L,\C_{\chi_m})$ of
weight $m/2$ and level $L$, defined as $$\{\phi \in
C^{\infty}(G_1(\Q)\backslash G_1(\A)/L,\C_{\chi_m}):
\phi(g\ell)=\chi_m(\ell^{-1}) \phi(g), \ell \in L_{\infty}, g\in
G_1(\A); \frak{p}_- \phi=0\}.$$

Adopted to our adelic setting Theorem 1 of \cite{KM90} states:

\begin{thm} \label{KM}
The induced pairing $$\langle \, , \, \rangle_K: H^i_c(S^1_K,\C) \times
H^{d-i}_{\rm ct}(H_1(\Real),S(V(\A)^n)^K)^{\frak{q}}_{\chi_m} \to
\Gamma(\mathcal{L}_m)$$ takes values in the holomorphic sections,
i.e., in the space of Siegel modular forms of weight $m/2$.
\end{thm}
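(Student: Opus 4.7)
The plan is to follow the argument of \cite{KM90} Theorem 1 line by line, verifying that each step transfers cleanly to the adelic similitude setting. What needs to be shown is that the smooth function
$$F(g) := \langle [\eta], [\phi]\rangle_K(g) = \int_{S^1_K} \eta \wedge \theta(g,\phi)$$
on $G_1(\A)$ satisfies the defining properties of a holomorphic section of $\mathcal{L}_m$: (i) left $G_1(\Q)$-invariance, (ii) right invariance under a sufficiently small level subgroup $L \subset G_1(\A_f)$ (determined by $\phi$ via the Weil representation), (iii) right $L_\infty$-equivariance via $\chi_m$, (iv) annihilation by $\frak{p}_-$, and (v) moderate growth.

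Conditions (i) and (ii) are formal. The first follows from the standard $G_1(\Q)\times H_1(\Q)$-invariance of the theta kernel $\theta(g,h,\phi)=\sum_{x\in V(\Q)^n}\omega(g,h)\phi(x)$ built into the Weil representation for the fixed additive character $\psi$. The second follows from the $K$-invariance of $\phi$ on $V(\A)^n$ together with the equivariance of $\omega$ under the appropriate finite-level subgroup on the symplectic side. The heart of the argument is (iii) and (iv), both of which follow from the hypothesis $\phi \in (\cdot)^{\frak{q}}_{\chi_m}$ combined with the basic identity
$$X \cdot_g \theta(g,h,\phi) = \theta(g,h,\omega(X)\phi), \qquad X \in \frak{g}'_{\C},$$
which transfers Lie-algebra differentiation in $g$ to the Weil-representation action on the Schwartz argument. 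For $X \in \frak{p}_-$ the hypothesis gives $\omega(X)\phi = d\alpha$ in the Schwartz-de~Rham complex on $D$, and differentiating under the adelic integral then applying Stokes to the closed, compactly supported class $\eta$ yields
$$X \cdot_g F(g) = \int_{S^1_K} \eta \wedge \theta(g, d\alpha) = \pm \int_{S^1_K} d(\eta\wedge\theta(g,\alpha)) = 0,$$
giving (iv). The identical computation for $X \in \frak{k}'$, together with the $\chi_m$-eigenspace property, integrates to the right $L_\infty$-equivariance by $\chi_m$, giving (iii). Moderate growth (v) is standard, since compact support of $\eta$ reduces the integration to a compact fundamental domain on which theta series associated to a fixed Schwartz function have at most polynomial growth in the $G_1(\A)$-variable.

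The only piece of work genuinely new relative to \cite{KM90} is bookkeeping. One must check that the measure on $S^1_K$ induced by the Haar measure $dh_1$ of \cite{HK92} is compatible with the decomposition \eqref{spacedecomp} into the connected pieces $\Gamma_j\backslash D$, so that the integral over $S^1_K$ genuinely reduces to a finite sum of classical Kudla--Millson pairings which one can attack component by component. One must also justify the interchange of Lie-algebra differentiation and the adelic integration uniformly in $g$; this is routine given the Schwartz decay of $\omega(X)\phi$ in the non-archimedean variables and the compact support of $\eta$. I do not expect any substantive obstacle beyond these translation issues, as the underlying representation-theoretic mechanism is identical to that of \cite{KM90}; the main obstacle is purely notational.
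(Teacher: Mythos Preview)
The paper does not give its own proof of this theorem: it is stated immediately after the sentence ``Adopted to our adelic setting Theorem~1 of \cite{KM90} states:'' and is treated as a direct restatement of Kudla--Millson's result, with no argument supplied. Your proposal to rerun the \cite{KM90} argument and check the five properties (i)--(v) is therefore exactly in line with what the paper relies on, and the sketch you give (in particular the Stokes/coboundary step for $\frak{p}_-$-annihilation and the $\chi_m$-equivariance from the $\frak{k}'$-action) is the standard and correct mechanism.

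One small correction of language: this theorem is still in the \emph{isometry} setting ($G_1={\rm Sp}_n$, $H_1={\rm O}(V)$), not the similitude setting; the genuine similitude extension is carried out afterwards in Proposition~\ref{SMF}. So there is nothing ``new relative to \cite{KM90}'' here beyond the purely cosmetic passage to adelic notation via the decomposition \eqref{spacedecomp}, which you correctly flag as bookkeeping.
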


\subsection{Similitude theta lift} We adapt the definition of the similitude theta lift for automorphic
forms in \cite{HK92} to extend the cohomological theta lifts of
Kudla and Millson.

Recall from Roberts \cite{Ro01} the definition of the extended Weil
representation for
$$R(\A)=\{(g,h) \in G(\A) \times H(\A) : \lambda(g)=\lambda(h)\}.$$ For $h
\in H(\A)$ and $\varphi \in S(V(\A)^n)$, let
$$L(h) \varphi(x)=|\lambda(h)|^{-mn/4} \varphi(h^{-1} x).$$
Now $(g,h) \in R(\A)$ acts via $\omega(g,h):=\omega(g_1)L(h)$ (see Section \ref{sympldefn} for definition of $g_1$).
The actions of $G_1(\A)$ and $H(\A)$ do not commute, but as in \cite{HK92} Lemma 5.1.2 it is easy to check the following:
\begin{lem} \label{commutation}
  For $g \in G(\A)$ and $h\in H(\A)$ we have $$\omega(g_1) L(h)=L(h) \omega(\begin{pmatrix}
  1&0\\0& \lambda(g)^{-1}\end{pmatrix} g).$$
\end{lem}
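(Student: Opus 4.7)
The plan is to recast the identity as a statement about conjugating the Weil representation and then verify it on standard Siegel generators of the symplectic group. Set $d = \begin{pmatrix} 1 & 0 \\ 0 & \lambda(g)^{-1}\end{pmatrix}$ and $\mu = \lambda(g) = \lambda(h)$. Then $g_1 = g d$, while $\begin{pmatrix} 1 & 0 \\ 0 & \lambda(g)^{-1}\end{pmatrix} g = d g = d(g d) d^{-1} = d\, g_1\, d^{-1}$, which again lies in $G_1(\A)$ (both elements have multiplier $1$). So the claim is equivalent to the conjugation identity
$$L(h)^{-1} \omega(g_1) L(h) = \omega(d\, g_1\, d^{-1}), \qquad g_1 \in G_1(\A).$$
Writing $L(h) = |\mu|^{-mn/4} S_h$ with $S_h \varphi(x) = \varphi(h^{-1} x)$, the scalar commutes with $\omega$, so it further suffices to prove $S_h^{-1} \omega(g_1) S_h = \omega(d\, g_1\, d^{-1})$ for all $g_1 \in G_1(\A)$.

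I would verify this on the Siegel generators: the Levi elements $m(A) = \begin{pmatrix} A & 0 \\ 0 & {}^tA^{-1}\end{pmatrix}$ with $A \in GL_n$, the unipotent elements $n(B) = \begin{pmatrix} I & B \\ 0 & I\end{pmatrix}$ with $B$ symmetric, and the Weyl element $w = \begin{pmatrix} 0 & -I_n \\ I_n & 0\end{pmatrix}$. A direct matrix computation gives $d\, m(A)\, d^{-1} = m(A)$, $d\, n(B)\, d^{-1} = n(\mu B)$, and $d\, w\, d^{-1} = m(\mu I_n)\, w$. Using the standard explicit Weil-representation formulas together with the similitude identity $(h^{-1}x, h^{-1}y) = \mu^{-1}(x,y)$, the $m(A)$ case is immediate: both the scalar $\chi_V(\det A)|\det A|^{m/2}$ and right translation by $A$ manifestly commute with $S_h$. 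For $n(B)$, conjugation by $S_h$ replaces $\psi(\tfrac{1}{2}\mathrm{tr}(B(x,x)))$ by $\psi(\tfrac{1}{2}\mathrm{tr}(B(hx,hx))) = \psi(\tfrac{1}{2}\mathrm{tr}(\mu B(x,x)))$, matching $\omega(n(\mu B))$ exactly.

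The main obstacle is the Weyl-element case. Here one must carry out the change of variables $y = hz$ in the Fourier-transform integral representing $\omega(w) S_h \varphi$ and check that the resulting Jacobian $|\det_{V(\A)} h|^n = |\mu|^{mn/2}$, combined with the $|\mu|^{-mn/4}$ factor in $L(h)$ and the factor $\chi_V(\mu^n)|\mu|^{mn/2}$ produced by $\omega(m(\mu I_n))$ on the right-hand side, yield the same operator. The hypothesis that $m$ is even is essential: it ensures that $\chi_V$ is a Hecke character of $\Q^*\backslash \A^*$ and that the Weil index $\gamma$ is unchanged under the rescaling $(\,,\,) \mapsto \mu(\,,\,)$, so that the Weil representations attached to $V$ and to the rescaled quadratic space $(V, \mu(\,,\,))$ agree on $G_1(\A)$. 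Once these normalizations match, the argument proceeds exactly as in \cite{HK92}, Lemma 5.1.2.
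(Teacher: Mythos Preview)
Your approach is correct and matches the paper's: the paper gives no argument beyond ``as in \cite{HK92} Lemma~5.1.2 it is easy to check,'' and the standard proof in that reference is precisely the verification on the Siegel--parabolic generators $m(A)$, $n(B)$, $w$ that you outline. Your reformulation $L(h)^{-1}\omega(g_1)L(h)=\omega(d\,g_1\,d^{-1})$ and the computations for $m(A)$ and $n(B)$ are clean and correct.

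One clarification worth making explicit in the $w$ case: when you compare $S_h^{-1}\omega(w)S_h$ with $\omega(m(\mu I_n))\omega(w)$, the two sides differ a priori by the factor $\chi_V(\mu)^n$. This factor is in fact $1$, but not for the reason you give. The point is that $\mu=\lambda(h)$ lies in the image of the similitude character of $GO(V)$, and for any such $\mu$ the quadratic spaces $(V,(\,,\,))$ and $(V,\mu(\,,\,))$ are isometric (via $h$ itself); comparing Hasse invariants over each $\Q_\ell$ then forces $\chi_{V,\ell}(\mu_\ell)=1$. So the vanishing of the $\chi_V$ factor uses the hypothesis $\mu\in\lambda(H(\A))$, i.e.\ $\lambda(g)=\lambda(h)$, which you already assumed. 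Your remark about the Weil index being stable under rescaling for even $m$ is related but not quite the mechanism that kills $\chi_V(\mu)^n$.
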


Let $K \subset H(\A_f)$ be a compact open subgroup and write $K_1=K
\cap H_1(\A_f)$. Consider a $K_1$-invariant Schwartz function
$\varphi \in S(V(\A_f)^n)$. Let \begin{equation} \tilde
\varphi_{\infty} \in [S(V(\Real)^n) \otimes
\Omega^{d-i}(D)]^{H_1(\Real)}\end{equation} be a holomorphic
Schwartz form and write $\tilde \varphi=\varphi \otimes \tilde
\varphi_{\infty}$. For $h \in H(\A)$ put $L(h) \tilde \varphi:= L(h_f) \varphi \otimes
(h_{\infty}.\tilde \varphi_{\infty})$.

\begin{lem} \label{H(Q)inv}
  For $h \in H(\Q)$ with $\lambda(h) >0$, $z \in D$, and $t_z \in \Lambda^{d-i} T_z D$,
  $$\sum_{x \in V(\Q)^n} L(h) \tilde \varphi(x,h_{\infty} z, {\rm DL}_{h_{\infty}} t_z)=\sum_{x \in V(\Q)^n} \tilde \varphi(x,z,t_z).$$
\end{lem}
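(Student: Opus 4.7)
The plan is to unfold the definition of $L(h)\tilde\varphi$ into its finite and infinite components, invoke the product formula for the multiplier, and reindex the sum over $V(\Q)^n$ using that $h \in H(\Q)$ acts on this set by bijection.

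First, by the definition preceding the lemma,
\begin{equation*}
L(h)\tilde\varphi(x, h_\infty z, DL_{h_\infty} t_z) = L(h_f)\varphi(x) \cdot (h_\infty \cdot \tilde\varphi_\infty)(x, h_\infty z, DL_{h_\infty} t_z),
\end{equation*}
where $x \in V(\Q)^n$ is placed diagonally in $V(\A)^n$. The finite piece equals $|\lambda(h_f)|_{\A_f}^{-mn/4} \varphi(h_f^{-1} x)$ directly from the definition of $L$.

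Next I would spell out the natural $H(\Real)$-action on Schwartz-valued differential forms that restricts to the $H_1(\Real)$-action (under which $\tilde\varphi_\infty$ is invariant): for $h_\infty \in H(\Real)$ with $\lambda(h_\infty) > 0$ (needed so that $h_\infty$ preserves $D$),
\begin{equation*}
(h_\infty \cdot \tilde\varphi_\infty)(x, z', t') = |\lambda(h_\infty)|^{-mn/4} \tilde\varphi_\infty(h_\infty^{-1} x, h_\infty^{-1} z', DL_{h_\infty^{-1}} t').
\end{equation*}
Specializing to $z' = h_\infty z$ and $t' = DL_{h_\infty} t_z$ converts this into
\begin{equation*}
(h_\infty \cdot \tilde\varphi_\infty)(x, h_\infty z, DL_{h_\infty} t_z) = |\lambda(h_\infty)|^{-mn/4} \tilde\varphi_\infty(h_\infty^{-1} x, z, t_z).
\end{equation*}

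Combining the two pieces, and using the product formula $|\lambda(h_f)|_{\A_f} \cdot |\lambda(h_\infty)|_\infty = 1$ valid for $h \in H(\Q)$, the scalar factors cancel and one obtains
\begin{equation*}
L(h)\tilde\varphi(x, h_\infty z, DL_{h_\infty} t_z) = \tilde\varphi(h^{-1} x, z, t_z).
\end{equation*}
Summing over $x \in V(\Q)^n$ and reindexing by $y = h^{-1} x$, which is a bijection of $V(\Q)^n$ since $h \in H(\Q)$, completes the proof.

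The only subtlety I foresee is pinning down the formula for the $H(\Real)$-action on the Schwartz-valued form $\tilde\varphi_\infty$; everything else is bookkeeping. The hypothesis $\lambda(h) > 0$ is used twice: to ensure that $h_\infty$ preserves $D$ in the first place (so the left-hand side even makes sense), and to guarantee that $|\lambda(h_\infty)|_\infty = \lambda(h_\infty)$ combines cleanly with $|\lambda(h_f)|_{\A_f}$ through the product formula.
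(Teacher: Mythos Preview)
Your proof is correct and follows essentially the same route as the paper's. The paper justifies the formula you posit for the $H(\Real)$-action on $\tilde\varphi_\infty$ by explicitly factoring $h_\infty = h_1 \cdot (\lambda(h_\infty)^{1/2} I_m)$ with $h_1 \in H_1(\Real)$, so that the $h_1$-part acts trivially by $H_1(\Real)$-invariance and only the central factor contributes; this is exactly the ``only subtlety'' you flag, and the paper's factorization is how one pins it down. Both arguments then reduce the left-hand side to $\sum_{x} \tilde\varphi(h^{-1}x, z, t_z)$ and reindex; the paper is terser about the product-formula cancellation, which you make explicit.
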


\begin{proof}
  Recall that the action of $h_{\infty}^{-1}$ on  $\omega \in \Omega^{d-i}(D)$ is given by $(h_{\infty}^{-1}.\omega)(z,t_z)=\omega(h_{\infty} z, {\rm DL}_{h_{\infty}} t_z)$.
  Note that we can factor $h_{\infty} \in H(\Real)$ with $\lambda(h_{\infty})>0$ as $$h_{\infty}=h_1 \cdot (\lambda(h_{\infty})^{1/2} I_m)$$ with $h_1 \in H_1(\Real)$ and that the action of $h_{\infty}^{-1}$ on $\Omega^{d-i}(D)$ is via $h_1^{-1}$ and the action of $h_{\infty}$ on $\tilde \varphi_{\infty}$ via $\lambda(h_{\infty})^{1/2} I_m \in Z(H(\Real))$. Therefore the left hand side of the Lemma equals $$\sum_{x \in V(\Q)^n} \tilde \varphi(h^{-1} x,z,t_z).$$
\end{proof}

\begin{defn} \label{thetaDefn}
Let $[\eta] \in H^i_c(S_K,\C)$ with central character $\chi_{\eta}$. For  $g\in G(\A)^+$ with $\lambda(g_{\infty})>0$
let $h\in H(\A)$ be any element such that $\lambda(h)=\lambda(g)$. Put
$$K_1^{h_f}=h_fK h_f^{-1} \cap H_1(\A_f)=h_fK_1 h_f^{-1}$$ and
$$\tilde \varphi'=L(h) \tilde \varphi \in [S(V(\A)^n) \otimes
\Omega^{d-i}(D)]^{H_1(\Real)\times K_1^{h_f}}.$$
Now define a function on $G(\A)^+ \cap G_+(\Real)$ by
\begin{equation}
\label{thetadefn}\theta_{\varphi}(\eta)(g)=\langle
[\eta(\cdot h_f)],[\tilde \varphi']\rangle_{K_1^{h_f}}(g_1).\end{equation}
\end{defn}

\begin{rem}
Note that for signature $s \neq t$ we have $\lambda(H(\Real))>0$ and so $G(\A)^+ \cap G_+(\Real)=G(\A)^+$.
\end{rem}

\begin{prop} \label{SMF}
The definition (\ref{thetadefn}) is independent of the
choice of $h$, left-invariant under $G(\Q)^+$, and
extends uniquely to a $G(\Q)$-left invariant function on $G(\A)$ with support in $G(\Q) G(\A)^+$.
This extended function, also denoted by $\theta_{\varphi}(\eta)$, is
an automorphic form on $G(\A)$ with central character $\chi_V^n
\chi_{\eta}$.
Furthermore, $\theta_{\varphi}(\eta)$ defines a holomorphic Siegel modular form
on $G(\A)$ of weight $m/2$.
\end{prop}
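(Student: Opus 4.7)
I would verify the four assertions in sequence, using Lemma~\ref{H(Q)inv}, Lemma~\ref{commutation}, Theorem~\ref{KM}, and Hasse--Minkowski as the main inputs. For independence of $h$: if $h'=h h_0$ is a second choice with $h_0 = h_{0,\infty}h_{0,f} \in H_1(\A)$, then at infinity the $H_1(\Real)$-invariance of $\tilde\varphi_\infty$ absorbs $h_{0,\infty}$, while at the finite places the simultaneous change $h_f \mapsto h_f h_{0,f}$, $K_1^{h_f} \mapsto h_{0,f}K_1^{h_f}h_{0,f}^{-1}$, together with the attendant replacement of $L(h_f)\varphi$ and $\eta(\cdot h_f)$ by their right translates, corresponds to the measure-preserving diffeomorphism between the two relevant Shimura varieties $S^1_{K_1^{h_f}}$; under this identification the integrand in (\ref{KMeqn}) is transported to itself.

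For $G(\Q)^+$-invariance: given $\gamma \in G(\Q)^+$ one has $\lambda(\gamma) \in \lambda(H(\A)) \cap \Q^*$, which by the Hasse--Minkowski principle applied to the quadratic spaces $V$ and $\lambda(\gamma)V$ equals $\lambda(H(\Q))$, so one can pick $h_\gamma \in H(\Q)$ with $\lambda(h_\gamma)=\lambda(\gamma)$ and use $h_\gamma h$ as the auxiliary element for $\gamma g$. Applying Lemma~\ref{commutation} to pass $h_\gamma$ across $\omega(g_1)$ reduces the desired identity $\theta_\varphi(\eta)(\gamma g) = \theta_\varphi(\eta)(g)$ to invariance of the theta sum under the diagonal $H(\Q)_+$-action on the Schwartz form paired against $\eta$, which is exactly Lemma~\ref{H(Q)inv} together with the $H_1(\Q)$-invariance of the cohomology class $[\eta]$. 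The extension to $G(\A)$ is then forced: $G(\Q) G(\A)^+$ is an open subgroup, and if $\gamma g = \gamma' g'$ with $g, g' \in G(\A)^+$ then $\gamma'^{-1}\gamma \in G(\Q) \cap G(\A)^+ = G(\Q)^+$, so setting $\theta_\varphi(\eta)(\gamma g) := \theta_\varphi(\eta)(g)$ is well-defined by the just-proved invariance, and extending by zero off $G(\Q) G(\A)^+$ produces the required $G(\Q)$-left-invariant function.

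Smoothness and right $K$-finiteness at the finite places are inherited from the finite level of $\varphi$ and $\eta$, while $Z(\mathfrak{g}')$-finiteness and moderate growth follow from the corresponding properties of the cohomological KM pairing (\ref{KMeqn}), which integrates a rapidly decaying Schwartz form against a compactly supported class. For the central character, an element $z \in Z(G(\A))$ acts on the Weil representation through the standard quadratic character $\chi_V^n$, and choosing $h_z \in Z(H(\A))$ with $\lambda(h_z) = \lambda(z)$ picks up the factor $\chi_\eta$ via the central action of $H$ on $[\eta]$, yielding the product $\chi_V^n \chi_\eta$. Finally, since $\tilde\varphi_\infty$ is a holomorphic Schwartz form, $\tilde\varphi'$ lies in the $\chi_m$-isotypic, $\mathfrak{q}$-invariant subspace to which Theorem~\ref{KM} applies, so the pairing is a holomorphic section of $\mathcal{L}_m$, i.e.\ transforms by $\chi_m$ under $L_\infty$ and is annihilated by $\mathfrak{p}_-$ — precisely holomorphy of weight $m/2$. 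The step I expect to be the main obstacle is the independence of $h$, since the Shimura variety $S^1_{K_1^{h_f}}$ itself depends on $h_f$ and one must carefully align the conjugated level, the translated class $\eta(\cdot h_f)$, the Weil-representation twist $L(h_f)\varphi$, and the adelic measure $dh_1$ from \cite{HK92}.
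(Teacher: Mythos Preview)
Your proposal is correct and follows essentially the same route as the paper: both pick a rational $h_\gamma\in H(\Q)$ with $\lambda(h_\gamma)=\lambda(\gamma)$ (you justify its existence via Hasse--Minkowski, which the paper leaves implicit), use Lemma~\ref{H(Q)inv} together with the invariance of $\eta$ to handle $G(\Q)^+$-invariance, cite the Harris--Kudla measure normalization for independence of $h$, and reduce holomorphicity to the Kudla--Millson result for $G_1$. One small slip: in the $G(\Q)^+$-invariance step you need the $H(\Q)_+$-invariance of $[\eta]$ (as a class on $S_K$), not merely $H_1(\Q)$-invariance, since $h_\gamma$ lies in $H(\Q)$ and not in $H_1(\Q)$; the paper uses exactly this stronger invariance.
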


\begin{proof}
The independence of the definition from the choice of $h$ follows from the choice of measure on p.83 of \cite{HK92}. The statement about the central character is proven as in Lemma 5.1.9(ii) of \cite{HK92} noting that the central character of $\eta$ has finite order.

To prove that $\theta_{\varphi}(\eta)$ is left-invariant under $\gamma \in {\rm GSp}_n(\Q)^+ \cap G(\Real)_+$ let $\gamma' \in H(\Q)$ such that $\lambda(\gamma')=\lambda(\gamma)$. We essentially follow the proof of \cite{HK92} Lemma 5.1.9(i), with the modifications necessary for our cohomological definition. By definition,

$$\theta_{\varphi}(\eta)(\gamma g)= \int_{S^1_{K_1^{\gamma'_f h_f}}} \eta(z,h_1 \gamma'_f h_f) \wedge \theta((\gamma g)_1,L(h_1 \gamma' h) \tilde \varphi(z)),$$ where we write $(z,h_1)$ for a point in $S^1_{K_1^{\gamma'_f h_f}}$. By the invariance of $\eta$ under $H(\Q)$ this equals
$$\int_{S^1_{K_1^{\gamma'_f h_f}}} \eta(\gamma'^{-1}(z,h_1 \gamma'_f h_f)) \wedge \theta((\gamma g)_1,L(\gamma'_f \gamma'^{-1}_f h_1 \gamma' h) \tilde \varphi(z)).$$ Changing variables to $(z',h_1')=(\gamma'^{-1}_{\infty} z, \gamma'^{-1}_f h_1 \gamma'_f)$, which preserves the measure since it corresponds to conjugation by $\gamma'^{-1}$ on $H_1(\A)$, we obtain
$$\theta_{\varphi}(\eta)(\gamma g)=\int_{S^1_{K_1^{h_f}}} \eta((z',h'_1 h_f)) \wedge \theta((\gamma g)_1,L(\gamma' h'_1 h) \tilde \varphi(\gamma'_{\infty} z')).$$
By the standard invariance under $G_1(\Q)$ and Lemma \ref{H(Q)inv} we have $$\theta((\gamma g)_1,L(\gamma' h'_1 h) \tilde \varphi(\gamma'_{\infty} z'))=\sum_{x \in V(\Q)^n} \omega(g_1) L(\gamma' h'_1 h) \tilde \varphi(x,\gamma'_{\infty} z')=\sum_{x \in V(\Q)^n} \omega(g_1) L(h'_1 h) \tilde \varphi(x, z'),$$ which shows that
$$\theta_{\varphi}(\eta)(\gamma g)= \theta_{\varphi}(\eta)(g).$$

For the last statement of the theorem we use that $\theta_{\varphi}(\eta)$ is right-invariant by a compact
open subgroup $\tilde L \subset G(\A_f)$ and that $$ G(\A)= \coprod_j
G(\Q) g_j G(\Real)_+ \tilde L$$ for some $g_j \in G(\A_f)$. By the
transformation properties of $\theta_{\varphi}(\eta)$ it enough to
prove that $$\frak{p}_-.(\theta_{\varphi}(\eta)|_{G_1(\Real) \times
G(\A_f)}) =0.$$ But Lemma 3.3 of \cite{KM90} shows for any $g_f \in G(\A_f)$ that
$\theta_{\varphi}(\eta)|_{G_1(\Real) \times \{g_f \}}$ is a
holomorphic section of $\mathcal{L}_m$ over $M_{L}$ for some $L
\subset G_1(\A_f)$ and hence $\theta_{\varphi}(\eta)$ defines a holomorphic Siegel modular form.
\end{proof}
\section{Fourier expansion}

\subsection {Definition of weighted cycles}
We recall the following definitions from \cite{K97}. Let $U \subset V$ be a $\Q$-subspace with ${\rm dim}_{\Q} U =n$ such
that $(\, , \,)|_U$ is positive definite. Put $D_U=\{Z \in D| Z \bot U\}$
and let $H_{1,U}$ be the stabilizer of $U$ in $H_1(\Real)$. Let
$H_{1,U}^0$ be the connected component of identity of $H_{1,U}$.

For a fixed neat level $K\subset H_1(\A_f)$ and an element $h\in
H_1(\A_f)$, we define a connected cycle associated to $U$ as
follows. Set
$$\Gamma'_h=H_1(\Q)_+ \cap hKh^{-1}$$ and $$\Gamma'_{h,U}= \Gamma'_h
\cap H_{1,U}^0.$$ Let $\Gamma_h$ (resp. $\Gamma_{h,U}$) denote the
image of $\Gamma'_h$ (resp. $\Gamma'_{h,U}$) in $H_1^{\rm
ad}(\Real)^+$.

Define the map $$\Gamma_{h,U} \backslash D_U \to \Gamma_h \backslash
D: \Gamma_{h,U} z \mapsto \Gamma_h z.$$ We will denote by
$c(U,h,K)$ the connected cycle this defines in $H_{(s-n)t}(\Gamma_h \backslash D,\Z)$. Note, that if $\gamma \in H_1(\Q)_+$, then
$$\gamma \cdot c(U,h,K)=c(\gamma U,\gamma h,K),$$ where $\gamma \cdot c(U,h,K)$
denotes the cycle which is the image of the composite mapping
$$\Gamma_{h,U} \backslash D_U \to \Gamma_h \backslash U \to \gamma \Gamma_h \gamma^{-1} \backslash D.$$
Also note that $$c(U,hk,K)=c(U,h,K)$$ for all $k\in K$.

We will now define weighted sums of these connected cycles to define
cycles for $S^1_K=\coprod \Gamma_j \backslash D$. Let $\beta=\beta^t
\in M_n(\Q)$ be a positive definite symmetric matrix and, for
$x=(x_1, \ldots, x_n) \in V^n$, put $$(x,x)=((x_i,x_j)) \in {\rm
Sym}_n(\Q),$$ where $( \, , \, )$ is the symmetric bilinear form on $V$. Let
$$\Omega_{\beta}=\{ x \in V^n | \frac{1}{2}(x,x)=\beta\}$$ be the
corresponding hyperboloid.

Let $S(V(\A_f^n))_{\Z}$ be the space of locally constant $\Z$-valued
functions on $V(\A_f^n)$ of compact support. For any commutative
ring $R$, let $$S(V(\A_f)^n)_R=S(V(\A_f^n))_{\Z} \otimes_{\Z} R.$$

Motivated by \cite{K97} Proposition 5.4 we make the following
definition:

\begin{defn}
For $\varphi \in S(V(\A_f)^n)_{R}$ and $K \subset H_1(\A_f)$, a $K$-invariant
Schwartz function, let
$$Z(\beta,\varphi, K)= \sum_j \sum_{x\in \Omega_{\beta}(\Q)
\mod{\Gamma_j'}} \varphi(h_j^{-1} x) \cdot c(U(x),h_j,K) \in
H_{(s-n)t}(S^1_K,\partial S^1_K, R),$$ where $U(x)$ is
the $\Q$-subspace of $V$ spanned by the components of $x$.
\end{defn}

\subsection{Fourier coefficients}
A choice of maximal compact subgroup $K_{\infty} \subset H_1(
\Real)$ determines a base point $Z_0 \in D$ and a  positive definite
form $( \, , \, )_0$ on $V$ which is a minimal majorant of the given form
$(\, , \, )$ on $V$ of signature $(s,t)$. We define the Gaussian
$\varphi_0 \in S(V(\Real)^n)$ by
$$\varphi_0(x_1, \ldots, x_n)=\prod_{i=1}^n {\rm exp}(-\pi (x_i,x_i)_0).$$ Kudla and Millson define in \cite{KM90} a particular holomorphic
Schwartz class $[\varphi^+_{nt}] \in H^{nt}_{\rm
ct}(H_1(\Real),S(V(\Real)^n))^{\frak{q}}_{\chi_m}$ taking value in
$\mathbf{S}(V(\Real)^n)$. Here the \textit{polynomial Fock space}
$\mathbf{S}(V(\Real)^n)$ is defined to be the space of those
Schwartz functions on $V(\Real)^n$ of the form $p(v_1,, \ldots, v_n)
\varphi_0(v_1, \ldots, v_n)$, where $p(v_1, \ldots, v_n)$ is a
polynomial function on $V(\Real)^n$.

Following \cite{KM82} and \cite{FM02} (4.17) we give here the
definition of the Schwartz form $\varphi^+_{n}$ in the case of
signature $(s,1)$ (we refer the reader to \S 5 of \cite{KM90} for
the general case): For $x=(x_1, \ldots, x_n) \in V(\Real)^n$ the
Schwartz form $\varphi^+_{n}\in [S(V(\Real)^n) \otimes
\Omega^{n}(D)]^{H_1(\Real)} \cong [S(V(\Real)^n) \otimes
\bigwedge^{n} (\mathfrak{p})^*]^{K_{\infty}}$ is given by
$$\varphi^+_{n}(x,w)=2^{n/2} {\rm det}(x,w) \varphi_0(x)$$ for $w=(w_1, \ldots, w_n) \in
\frak{p}^n \cong (Z_0^{\bot})^n$, where $(x,w)$ is the matrix with
$(i,j)$-th entry $(x_i,w_j)$. (In fact, this differs from the
definition in \cite{KM90} by the factor $2^{n/2}$.) Theorem 5.2 of
\cite{KM90} proves that this gives rise to a holomorphic Schwartz
class in $H^{n}_{\rm
ct}(H_1(\Real),S(V(\Real)^n))^{\frak{q}}_{\chi_m}$.

Using the calculation of Fourier coefficients by Kudla and Millson
we are going to prove in the next sections that for $$\tilde
\varphi_{\infty}=\varphi^+_{nt}\in [S(V(\Real)^n) \otimes
\Omega^{nt}(D)]^{H_1(\Real)} \cong [S(V(\Real)^n) \otimes
\bigwedge^{nt} (\mathfrak{p})^*]^{K_{\infty}}
$$ the form $\theta_{\varphi}(\eta)$ given by Proposition \ref{SMF} is an arithmetic Siegel modular
form for arithmetic $\varphi$ and $\eta$. We fix this choice for $\tilde
\varphi_{\infty}$ from now on.

Let $(V_+, ( \, , \, )_+)$ be a positive definite inner product space of
dimension $m$ over $\Real$, and let $\varphi_+^0 \in S(V^n_+)$ be
the Gaussian $$\varphi_+^0(x)= {\rm exp}(-\pi {\rm tr}(x,x)_+).$$ If
$x \in V^n_+$ with $(1/2) (x,x)_+=\beta \in {\rm Sym}_n(\Real)$,
then for $g \in G_1(\Real)$, we define the generalized
Whittaker function \begin{equation} \label{whittaker}
W_{\beta}(g)=\omega_+(g) \varphi^0_+(x),\end{equation} where
$\omega_+$ is the Weil representation
associated to $V_+$. As in \cite{K97} (7.22) we note that for
$$g=\begin{pmatrix}1&u\\ & 1 \end{pmatrix}
\begin{pmatrix}v^{1/2}& \\ & v^{-1/2} \end{pmatrix} \ell$$  with $\ell \in L_{\infty}$
and $\tau=u+iv= g (i \cdot {\rm 1}_n) \in \mathbf{H}_n$ we have
$$W_{\beta}(g)={\rm det}(v)^{m/4} {\rm exp}(2 \pi i {\rm tr}(\beta \tau)) {\rm det}(\ell)^{m/2}.$$

\begin{thm} \label{Fourier}
Let $K \subset H(\A_f)$ be a compact open subgroup small enough such
that $-1$ is not in $K \cap H_1(\Q)_+$. Put $K_1=K \cap H_1(\A_f)$. Let $[\eta] \in
H^{(s-n)t}_c(S_K,\C)$ and $\varphi \in S(V(\A_f)^n)^{K_1}$. For $g
\in G_1(\Real) \times G(\A_f)^+$ let $h\in H(\A_f)$
such that $\lambda(h)=\lambda(g)$ and
$$\varphi'=\omega(g_f,h) \varphi \in S(V(\A_f)^n).$$
Then we have
$$\theta_{\varphi}(\eta)(g)=\sum_{\beta > 0} W_{\beta}(g_{\infty})
\cdot \int_{Z(\beta,\varphi',K^{h}_1)} \eta(h).$$
\end{thm}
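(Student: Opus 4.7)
The plan is to substitute Definition \ref{thetaDefn} into the Kudla--Millson pairing (\ref{KMeqn}), expand the theta kernel, and group its terms by the Gram matrix $\beta = \tfrac{1}{2}(x,x)$. After applying Lemmas \ref{commutation} and \ref{H(Q)inv} the $g_f$-dependence of $\omega(g_1)L(h)\tilde\varphi$ gets absorbed into $\varphi' = \omega(g_f,h)\varphi$ at the finite places, while the archimedean component remains $\omega(g_\infty)\varphi^+_{nt}$. This yields a formal decomposition $\theta_\varphi(\eta)(g) = \sum_\beta I_\beta(g)$ where $I_\beta$ collects the contribution of those $x \in V(\Q)^n$ with $\tfrac{1}{2}(x,x)=\beta$.

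For a fixed positive definite $\beta$, I would unfold $I_\beta$ using the connected component decomposition (\ref{spacedecomp}) together with the orbital partition $\Omega_\beta(\Q)=\coprod_{x\bmod \Gamma_j'}\Gamma_j'\cdot x$. Since $\beta > 0$, the subspace $U(x)$ is anisotropic, and the stabilizer of $x$ in $\Gamma_j'$ coincides with $\Gamma'_{h_j,U(x)}$; the standard unfolding then collapses the sum into
$$I_\beta(g) = \sum_j \sum_{x \bmod \Gamma_j'} \varphi'(h_j^{-1}x)\, \int_{\Gamma_{h_j,U(x)}\backslash D} \eta \wedge \omega(g_\infty)\varphi^+_{nt}(x,\cdot).$$
The central computation is the evaluation of the inner integral: here I would invoke the Kudla--Millson Fourier coefficient theorem (\cite{KM90} \S 5, reformulated in \cite{K97} \S 7), which asserts that $\omega(g_\infty)\varphi^+_{nt}(x,\cdot)$ represents the Poincar\'e dual on $D$ of the totally geodesic submanifold $D_{U(x)}$, with proportionality factor equal to the generalized Whittaker function $W_\beta(g_\infty)$ from (\ref{whittaker}). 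Consequently the inner integral equals $W_\beta(g_\infty)\cdot \int_{c(U(x),h_j,K_1^{h_f})}\eta$, and reassembling the sums produces $W_\beta(g_\infty)\cdot \int_{Z(\beta,\varphi',K_1^h)}\eta$ as desired.

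The main obstacles are twofold. First, one must carefully track measure conventions and the twist by $h$, in particular verifying that the substitution $x\mapsto h_j^{-1}x$ combined with the normalization $|\lambda(h)|^{-mn/4}$ in $L(h_f)$ and the Weil-representation action $\omega(g_f)$ produces precisely the weight $\varphi'(h_j^{-1}x)$ in the definition of $Z(\beta,\varphi',K_1^h)$; this is a bookkeeping calculation parallel to \cite{HK92} but complicated by the Schwartz-form factor $\tilde\varphi_\infty$. Second, one must show $I_\beta=0$ for $\beta\not>0$. The cleanest route is to use the holomorphy of $\theta_\varphi(\eta)$ proved in Proposition \ref{SMF}, which forces the Fourier expansion to be supported on $\beta\geq 0$, combined with the fact, contained in the construction of $\varphi^+_{nt}$ in \cite{KM90}, that $\omega(g_\infty)\varphi^+_{nt}(x,\cdot)$ vanishes whenever the components of $x$ are linearly dependent; this eliminates the degenerate positive semi-definite stratum and leaves only the $\beta>0$ terms in the expansion.
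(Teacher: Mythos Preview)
Your architecture matches the paper's: unwind Definition \ref{thetaDefn}, decompose the theta kernel by $\beta = \tfrac{1}{2}(x,x)$, unfold each $\beta$-piece over the components $\Gamma_j \backslash D$ and over $\Gamma_j'$-orbits in $\Omega_\beta(\Q)$, and invoke Thom's Lemma for $\beta>0$. The substantive gap is in your handling of the vanishing for $\beta$ not positive definite. Your proposed mechanism --- that $\omega(g_\infty)\varphi^+_{nt}(x,\cdot)$ vanishes whenever the components of $x$ are linearly dependent --- does not eliminate the degenerate positive semi-definite stratum. If $\beta \geq 0$ is singular there exist $x \in \Omega_\beta(\Q)$ with linearly \emph{independent} components: a vector in the kernel of $\beta$ corresponds to a nonzero \emph{isotropic} element of the span of the $x_i$, not to a linear relation among them, and for such $x$ the form $\varphi^+_{nt}(x,\cdot)$ is not identically zero on $D$. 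The actual vanishing is the deep content of \cite{KM90} Theorem 9.1, and it is precisely here that the hypothesis that $t$ is odd enters. Indeed, as the paper remarks immediately after the proof, for even $t$ the \emph{same} Schwartz form $\varphi^+_{nt}$ produces nonzero Fourier coefficients at degenerate $\beta$ (involving Euler-form periods, \cite{KM90} Theorem 9.3); so the vanishing cannot be a pointwise feature of $\varphi^+_{nt}$ built into its construction, and your holomorphy-plus-linear-dependence argument cannot substitute for it.

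A secondary omission: your citation of \cite{KM90}/\cite{K97} for Thom's Lemma does not cover the case of infinite geodesics $D_U$, which arise when $\Gamma_U$ is trivial in signature $(s,1)$. The paper flags this gap explicitly and supplies the missing case for $(s,t)=(3,1)$, $n=2$ in Section \ref{thomh3}.
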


\begin{rem}
  If $-1 \in K \cap H_1(\Q)_+$ then the right hand side must be multiplied by a factor of $2$.
\end{rem}

\begin{proof}
We follow the proof of Theorem 8.1 of \cite{K97}. Put
$\eta'=h.\eta$. Write $H_1(\A_f)=\coprod_j H_1(\Q)_+ h_j K^h_1$. By
(\ref{spacedecomp}) we get
$$\theta_{\varphi}(\eta)=\int_{S^1_{K^h_1}} \eta' \wedge \theta(g_{\infty},\varphi')$$
$$=\sum_j
\int_{\Gamma_j \backslash D} \eta'(h_j) \wedge \theta(g_{\infty},h_j,\varphi')
=\sum_j \sum_{\beta \in {\rm Sym}_n(\Q)} \sum_{x \in
\Omega_{\beta}(\Q) \mod{\Gamma_j'}} \int_{\Gamma_{j,x}\backslash D}
\eta'(h_j) \wedge \omega(g_{\infty}) \tilde \varphi'(h_j^{-1}x).$$
Here we use the fact, that if $\Gamma_{j,x}$ is the image in
$\Gamma_j$ of the stabilizer $\Gamma'_{j,x}$ of $x$ in $\Gamma'_j$,
then under our assumptions on $K$ we have
$$\Gamma'_{j,x} \backslash \Gamma'_j \cong \Gamma_{j,x} \backslash
\Gamma_j.$$

Now one main result of \cite{KM90}  is that the terms where $\beta$
is not positive definite vanish (this is where our assumption that
$t$ is odd comes in!). We require a form of Thom's Lemma, as stated
in \cite{KM90} Theorem 9.1, where the results of \cite{KM87} (for
$\Gamma \backslash D$ compact) and \cite{KM88} (for $\Gamma
\backslash D$ finite volume) are combined. In fact, these results do
not cover the case of an infinite geodesic, which can arise for
signature $(s,1)$. \cite{FM02} added a proof for this in the case of
signature $(2,1)$ and a principal congruence subgroup. We are going
to prove this for our main case of interest ($(s,t)=(3,1)$ and
$n=2$) in the next section.

\begin{thm}[Thom's Lemma]
Let $\beta>0$ and $x \in \Omega_{\beta}(\Q)$. Put $U=U(x)$. Let
$\Gamma_U$ be a discrete subgroup of $H_{1,U}^0$. For any closed and
bounded $(s-n)t$-form $\eta$ on $\Gamma_U\backslash D$,
$$\int_{\Gamma_U \backslash D} \eta \wedge (\omega(g_{\infty}) \tilde
\varphi_{\infty})(x) =W_{\beta}(g_{\infty}) \int_{\Gamma_U
\backslash D_U} \eta$$
\end{thm}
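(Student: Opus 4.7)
The plan is to follow the Kudla--Millson strategy used to prove Theorem 9.1 of \cite{KM90} in the compact and finite-volume cases, and to extend it to the infinite-geodesic case in the manner of \cite{FM02} for signature $(2,1)$.

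I would first reduce to $g_\infty = \mathbf{1}$: since $\tilde\varphi_\infty = \varphi_{nt}^+$ is a polynomial in $x$ times the Gaussian $\varphi_0$, and the Weil representation acts on the Gaussian at $x$ with $\frac{1}{2}(x,x) = \beta$ via the Whittaker factor $W_\beta(g_\infty)$ from (\ref{whittaker}), the $g_\infty$-dependence of the left-hand side peels off as $W_\beta(g_\infty)$. It thus suffices to prove
\[
\int_{\Gamma_U \backslash D} \eta \wedge \tilde\varphi_\infty(x) = \int_{\Gamma_U \backslash D_U} \eta.
\]
In our case $(s,t) = (3,1)$, $n = 2$, we have $D = \h$ and $D_U$ is a geodesic (the negative lines in the signature $(1,1)$ subspace $U^\perp$). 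In Fermi coordinates $(t, r, \theta)$ around $D_U$, with $t$ the arclength along $D_U$ and $(r, \theta)$ geodesic polar coordinates in the normal plane, $\tilde\varphi_\infty(x)$ is a closed $2$-form with Gaussian decay in $r$ and integral $1$ over each normal plane. When $U^\perp$ is anisotropic over $\Q$, Borel--Harish-Chandra gives $\Gamma_U \backslash D_U$ compact (a circle), and Fubini together with the Thom-form property yield the identity directly, as in \cite{KM87}.

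The case requiring genuinely new input is when $U^\perp$ is isotropic over $\Q$, so that $\Gamma_U \backslash D_U$ is an infinite geodesic. I would write $\tilde\varphi_\infty(x) = \pi^*\tau_x + d\mu_x$, where $\pi : \h \to D_U$ is orthogonal projection along normal geodesics, $\tau_x$ is a tube Thom form supported in $r \leq 1$ with $\pi_*\tau_x = 1$, and $\mu_x$ is an explicit transgression form with Gaussian decay in $r$ uniform in $t$. On a truncation $K_{T,R} = \{|t|\leq T,\ r\leq R\}$, Stokes' theorem and Fubini give
\[
\int_{K_{T,R}} \eta \wedge \tilde\varphi_\infty(x) = \int_{|t|\leq T} \eta\big|_{D_U} + \int_{\partial K_{T,R}} \eta \wedge \mu_x,
\]
and the $r = R$ boundary contribution vanishes as $R \to \infty$ by the Gaussian decay of $\mu_x$ and boundedness of $\eta$.

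The principal obstacle is then to show that the $|t| = T$ boundary contributions vanish as $T \to \infty$. Since $\mu_x$ does not decay along $D_U$, these do not vanish term-by-term but must be shown to be $o(1)$ via a careful asymptotic analysis of $\mu_x$ in Fermi coordinates, using the closedness and boundedness of $\eta$ and the rotational symmetry of $\mu_x$ in $\theta$. This is the direct analogue of the delicate convergence argument carried out by Funke--Millson \cite{FM02} in signature $(2,1)$, with the extra normal coordinate present in the $(3,1)$ case accommodated by the $\theta$-symmetry.
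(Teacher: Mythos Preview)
Your approach is genuinely different from the paper's, and the key step at the end is not justified.

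The paper does \emph{not} argue via a Thom form/transgression decomposition and Stokes' theorem. In Section~\ref{thomh3} it works in the explicit model of $V$ as hermitian $2\times 2$ matrices over an imaginary quadratic field, passes to hyperbolic $3$-space, and represents $\eta$ (via Eichler--Shimura--Harder) by a \emph{rapidly decreasing} harmonic differential attached to a weight~$2$ cusp form with a $K$-Bessel Fourier expansion. It then unfolds the $D$-integral to $\Gamma_\infty\backslash D$, applies Poisson summation in the unipotent variable to the explicit Schwartz form $\varphi_2^+$, integrates in $z$ to isolate Fourier coefficients, and finishes with an integration by parts in $r$ using the identities $\frac{d}{dx}(xK_1(x))=-xK_0(x)$ and $\lim_{x\to 0^+}xK_1(x)=1$. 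Both sides of the identity are matched to the same explicit Dirichlet-type series in the Fourier coefficients $c(\xi)$.

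Crucially, this is also what \cite{FM02} does in signature $(2,1)$: the paper says so explicitly (``We essentially follow the proof for signature $(2,1)$ in Theorem~7.6 of \cite{FM02}''). Your appeal to \cite{FM02} for a ``delicate convergence argument'' of the transgression--Stokes type is therefore misplaced; Funke--Millson's proof of the infinite-geodesic case is a Fourier-expansion computation, not a boundary-term analysis.

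This matters because the step you leave open --- showing that $\int_{\{|t|=T\}}\eta\wedge\mu_x\to 0$ --- is exactly the obstruction that drives both \cite{FM02} and the present paper to the Fourier-analytic method. The transgression $\mu_x$ has no decay in the geodesic direction $t$, and \emph{boundedness} of $\eta$ alone does not force these boundary terms to vanish; rotational symmetry in $\theta$ does not help since $\eta$ has no reason to respect that symmetry. What one actually needs is the cuspidality (rapid decrease at the two cusps joined by $D_U$) of a well-chosen representative of $[\eta]$, and to exploit it one is led back to the Fourier expansion of $\eta$ at the cusp --- i.e.\ to the method the paper uses. As written, your proposal does not close this gap.
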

\end{proof}

\begin{rem}
Kudla and Millson also treat the case of even $t$. In this case
one has non-trivial Fourier coefficients for positive semi-definite
$\beta$, for which the period integral involves also powers of an
Euler form (see Theorem 9.3 of \cite{KM90}).
\end{rem}

\subsection{Thom Lemma for hyperbolic 3-space} \label{thomh3}
In this section we will prove Thom's Lemma in the special case we
are most interested in: fix $(s,t)=(3,1)$, $n=2$ throughout this
section.

Let $F=Q(\sqrt{-D})$ be an imaginary quadratic field. We denote its
ring of integers by $\mathcal{O}$. Underlying our calculation is the
accidental isomorphism $${\rm Spin}_V(\Real) \cong {\rm Res}_{F/\Q} {\rm
SL}_2(\Real).$$ On the one hand, the symmetric space $D$ in this case can be realized as $$D=\{Z \in V(\Real): (Z,Z)=-1\}^0,$$ on the other hand it is isomorphic to
hyperbolic $3$-space $\mathbf{H}_3=\C \times \Real_{>0}$, elements
of which we write as $(z,r)$ with $z=x+iy$ for $x,y \in \Real, r\in
\Real_{>0}$. The group ${\rm GL}_2(\C)$ acts on $\mathbf{H}_3$ via
hyperbolic isometries. The action is most concisely written using
quaternion notation: identifying the point $(z,r)$ with the
quaternion $q=z+rj$ the action is given by $$\begin{pmatrix}a&b\\c&d
\end{pmatrix}: q \mapsto \frac{aq+b}{cq+d}.$$

Since we only work with $V(\Real)$ in the following, we can assume without loss of generality that $V$ is the
 hermitian matrices $$V=\{ X \in M_2(F): X^t=\overline X \}$$ with quadratic form $$X \mapsto -{\rm det}(X)$$
and corresponding bilinear form $$(X,Y) \mapsto - \frac{1}{2} {\rm
tr}(X \cdot Y^*),$$ where $$\begin{pmatrix}a&b \\c&d
\end{pmatrix}^*=\begin{pmatrix}d&-b \\-c&a \end{pmatrix}.$$
The group ${\rm SL}_2(F)$ acts isometrically on $V$ by
$$X \mapsto g X \overline g^t$$ for $X\in V$ and $g\in {\rm
SL}_2(F)$.

We fix an orthonormal basis of $V(\Real)$ given by
$e_1=\begin{pmatrix}1& \\&-1\end{pmatrix}$, $e_2=\begin{pmatrix}&1
\\1&\end{pmatrix}$, $e_3=\begin{pmatrix}&i \\-i&\end{pmatrix}$, and $e_4=\begin{pmatrix}1&
\\&1\end{pmatrix}=Z_0$, with respect to which the pairing is of the form
$$(\, , \,) \sim \begin{pmatrix}1&&&\\&1&&\\&&1&\\&&&-1\end{pmatrix}.$$
We identify $\frak{p}$ with $\Real^3$ via the basis $\{e_1, e_2,
e_3\}$ for $Z_0^{\bot}$. Let $\{\omega_1, \omega_2, \omega_3\}$ be
the corresponding dual basis of $\frak{p}^*$. Choosing the basis $\{e_1, \ldots,
e_4\}$ fixes an isomorphism $V(\Real)^2 \cong M_{4,2}(\Real)$. By \cite{FM02} (4.14) we then obtain for $X=((x_{1,1}, \ldots x_{1,4}), (x_{2,1}, \ldots x_{2,4}))\in V(\Real)^2$ that
$$\varphi_2^+(X)=2 (\omega(1,X) \wedge \omega(2,X)) \cdot
\varphi_0(X)$$ with $\omega(s,X)= \sum_{i=1}^3 x_{i,s} \omega_i$.

We also pick two isotropic vectors $u_0=\begin{pmatrix}1&
\\&\end{pmatrix}$ and $u_0'=\begin{pmatrix}& \\&1\end{pmatrix}$ and
note that with respect to the basis $\{u_0, e_2, e_3, u_0'\}$ of
$V(\Real)$ the majorant associated to the base point $Z_0$ is given
by $$(\, , \,)_0 \sim
\begin{pmatrix}1/2&&&\\&1&&\\&&1&\\&&&1/2\end{pmatrix}.$$

For an ideal $\frak{n} \subset \mathcal{O}$ put $$\Gamma_0(\frak{n})=\left\{\begin{pmatrix}a&b\\c&d
\end{pmatrix} \in {\rm SL}_2(\mathcal{O}): c\in \frak{n} \right\}.$$
Let $[\eta] \in
H^1_c(\Gamma_0(\frak{n}) \backslash \mathbf{H}_3, \C)$  be the class
of a rapidly decreasing harmonic form $\eta$.
Writing $\varphi(X,(z,r))$ for the $2$-form on $D$ corresponding to
$\varphi_2^+$ we need to show
$$\int_{\Gamma_U \backslash D} \eta(z,r) \wedge \varphi(X, (z,r))= {\rm
exp}(-\pi {\rm tr}(X,X)) \int_{\Gamma_U \backslash D_U} \eta$$ for
$U=U(X)$ with $(X,X)>0$ and $\Gamma_U$ the stabilizer of $U$ in
$\Gamma_0(\frak{n})$. The subspace $D_U$ has signature $(1,1)$ and
therefore the stabilizer $\Gamma_U$ is either infinitely cyclic or
trivial. In the first case, the cycle $\Gamma_U \backslash D_U$ is a
closed geodesic and Thom's Lemma holds by \cite{KM88}. When the
stabilizer is trivial, the cycle $D_U$ is an infinite geodesic
joining two cusps.

\begin{thm}
Assume $\Gamma_U$ is trivial. Then \begin{equation}
\label{Thom}\int_{D}  \eta \wedge \varphi(X) = {\rm exp}(-\pi {\rm
tr}(X,X)) \int_{D_U} \eta.\end{equation}
\end{thm}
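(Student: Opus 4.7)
The plan is to follow the Kudla-Millson and Funke-Millson approach: produce a transgression $1$-form $\psi(X)$ on $D\setminus D_U$ with $d\psi(X)=\varphi(X)$ that decays in a Gaussian fashion transversely to $D_U$, then apply Stokes' theorem on a double truncation of $D$. The right-hand factor $\exp(-\pi\,\mathrm{tr}(X,X))$ will emerge as a Thom-type residue around $D_U$, while the remaining boundary terms will be forced to vanish by the rapid decay of $\eta$. This infinite-geodesic case is precisely what \cite{KM87} and \cite{KM88} do not cover, since $D_U$ is noncompact of infinite hyperbolic length.

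\textbf{Reduction and setup.} By applying an isometry of $\mathbf{H}_3$ that sends the two endpoints of $D_U$ to $0$ and $\infty$, we may assume $D_U$ is the vertical geodesic $\{(0,r):r>0\}\subset \mathbf{H}_3$. Using
\[
\varphi_2^+(X) = 2\bigl(\omega(1,X)\wedge\omega(2,X)\bigr)\varphi_0(X),
\]
one writes $\varphi(X)(z,r)$ explicitly and observes that $\varphi_0(X)(z,r)$ is Gaussian in the hyperbolic distance transverse to $D_U$, with minimum value $\exp(-\pi\,\mathrm{tr}(X,X))$ attained along $D_U$. The primitive $\psi(X)$ is produced by the standard transgression operator in the Weil representation, as in \cite{KM90} \S5, so that $d\psi(X)=\varphi(X)$ on $D\setminus D_U$ and $\psi(X)$ inherits Gaussian decay in $|z|/r$.

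\textbf{Stokes, Thom residue, and the cusp estimate.} For $T\geq 1$ and $\epsilon>0$, set $D_{T,\epsilon}=\{(z,r):1/T\leq r\leq T,\ |z|/r\geq\epsilon\}$. Since $d\eta=0$, Stokes' theorem yields
\[
\int_{D_{T,\epsilon}}\eta\wedge\varphi(X) = \int_{\partial D_{T,\epsilon}}\eta\wedge\psi(X).
\]
Letting $\epsilon\to 0$, the Fermi-tube piece of the boundary contributes exactly $\exp(-\pi\,\mathrm{tr}(X,X))\int_{D_U\cap\{1/T\leq r\leq T\}}\eta$ by the explicit evaluation of $\varphi_2^+$ on a normal disc. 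The theorem thus reduces to showing that $\int_{r=T}\eta\wedge\psi(X)$ and $\int_{r=1/T}\eta\wedge\psi(X)$ tend to $0$ as $T\to\infty$; this is the main technical obstacle. Using Gaussian decay of $\psi(X)$ in $|z|$ combined with an at-most-polynomial bound on $\psi(X)$ along $D_U$ itself (to be extracted from the explicit transgression formula), one dominates each horosphere integral by $CT^N\sup_{r=T}|\eta|$; the rapid decay of $\eta$ at the cusp $\infty$ then kills this as $T\to\infty$, and the cusp $0$ is handled by the hyperbolic inversion $(z,r)\mapsto (z,r)/(|z|^2+r^2)$, which fixes $D_U$ and swaps the two cusps.
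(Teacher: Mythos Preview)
Your approach is sound in outline but takes a genuinely different route from the paper's. The paper does not construct a transgression form at all; instead it carries out a direct computation. After normalizing $X$ so that $D_U$ joins $\infty$ to a point of $F$, the paper unfolds the integral over $D$ against the unipotent stabilizer $\Gamma_\infty = \left\{\left(\begin{smallmatrix}1&\alpha\\0&1\end{smallmatrix}\right): \alpha \in \mathcal{O}\right\}$, applies Poisson summation in $\alpha$, and then invokes the explicit Fourier expansion of $\eta$ in terms of $K$-Bessel functions (via the Eichler--Shimura--Harder isomorphism, as in \cite{CW94}). Integrating out $z$ matches the Fourier modes, and an integration by parts in $r$ using $\frac{d}{dx}(xK_1(x)) = -xK_0(x)$ together with $\lim_{x\to 0^+} xK_1(x)=1$ collapses the remaining $r$-integral. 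Both sides are then identified with the single explicit expression
\[
\frac{1}{(4\pi)^2}\, e^{-\pi\,\mathrm{tr}(X,X)} \sum_{\xi \in \vartheta^{-1}} c(\xi)\,\Psi\!\left(\xi\bigl(\tfrac{a}{b}+i\tfrac{c}{d}\bigr)\right)\mathrm{Nm}(\xi)^{-1}.
\]
Your Stokes/transgression argument is closer in spirit to the compact-quotient proofs in \cite{KM87} and \cite{KM88} and never needs the Bessel expansion of $\eta$; the paper's method, by contrast, produces an explicit closed formula for the period, which feeds directly into the arithmetic analysis later on.

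Two points in your sketch do need tightening before it is a proof. First, the isometry sending the endpoints of $D_U$ to $0$ and $\infty$ must be chosen in $\mathrm{SL}_2(F)$ (possible because both endpoints lie in $F\cup\{\infty\}$), so that the transformed $\eta$ is still a cusp form for a congruence subgroup and hence rapidly decreasing at both $0$ and $\infty$; an arbitrary isometry of $\mathbf{H}_3$ would destroy this. Second, your region $D_{T,\epsilon}=\{1/T\le r\le T,\ |z|/r\ge\epsilon\}$ is unbounded in the $z$-direction, so Stokes does not apply as stated; you need an additional truncation $|z|\le R$ and must dispose of the $|z|=R$ boundary first using the Gaussian transverse decay of $\psi(X)$ against the $\mathcal{O}$-periodicity of $\eta$.
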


\begin{proof}
We essentially follow the proof for signature $(2,1)$ in Theorem 7.6
of \cite{FM02} but have to deal with the more complicated Fourier expansion for modular forms over imaginary quadratic fields.
We can assume $$X=(2au_0 +be_2,2cu_0+de_3)$$ with
$d=d'/\sqrt{D}$ and $a,c \in \Q$, $b,d' \in \Q_{>0}$ so that $D_U$
is the geodesic joining the cusps $\infty$ and $\frac{a}{b} +
\frac{c}{d'} \sqrt{-D} \in F$. The stabilizer of the cusp $\infty$
is $\Gamma_{\infty}=\left \{\begin{pmatrix}1&\alpha \\&1
\end{pmatrix} : \alpha \in \mathcal{O} \right\}.$
We have
$$\int_D \eta \wedge \varphi(X)  = \int_{\Gamma_{\infty} \backslash D}
\eta \wedge \sum_{\alpha \in \mathcal{O}} \varphi(X+(2b
\alpha_1 u_0,2d \alpha_2 u_0), (z,r)),$$ where we write $\alpha_1={\rm Re}
(\alpha)$ and $\alpha_2={\rm Im}(\alpha)$. We get an explicit
formula for $\varphi(\begin{pmatrix}1&\alpha \\&1
\end{pmatrix}.X, (z,r))$ by calculating
\begin{eqnarray*}\lefteqn{\varphi\left(\begin{pmatrix}r^{-1/2}&-zr^{-1/2} \\&r^{1/2}
\end{pmatrix}.(X+(2b\alpha_1 u_0,2d \alpha_2 u_0)), (0,1)\right)=}\\&=& \frac{1}{2} e^{-\frac{2 \pi}{r^2}
(a+b(\alpha_1-x))^2} e^{-\pi(b^2+d^2)} e^{-\frac{2 \pi}{r^2}
(c+d(\alpha_2-y))^2} \cdot (\frac{1}{r}(a+b(\alpha_1-x))
\frac{dr}{r}+b \frac{dx}{r}) \wedge(\frac{1}{r}(c+d(\alpha_2-y))
\frac{dr}{r}+d \frac{dy}{r})\\&=:&\left( \varphi_1 dy \wedge dr +
\varphi_2 dx\wedge dy + \varphi_3 dr\wedge dx \right) e^{- \pi
(b^2+d^2)}.
\end{eqnarray*}

The Fourier transform with respect to $\alpha$, which we define as
$$\hat{\varphi}(\beta_1+i \beta_2)= \int_{-\infty}^{\infty}
\int_{-\infty}^{\infty} \varphi(\begin{pmatrix}1&\alpha_1 + i
\alpha_2\\&1
\end{pmatrix}.X, (z,r)) e^{2\pi
i \alpha_1 \beta_1} e^{2 \pi i \alpha_2 \beta_2} d\alpha_1
d\alpha_2$$ is then given by $$\hat{\varphi_1}(\beta,X) dy \wedge dr
+ \hat{\varphi_2}(\beta,X) dx\wedge dy + \hat{\varphi_3}(\beta,X) dr
\wedge dx$$ with
\begin{eqnarray*}\hat{\varphi_2}(\beta,X)&=& e^{-\frac{\pi}{2}
\left(\frac{\beta_1 r}{b}\right)^2} e^{-\frac{\pi}{2}
\left(\frac{\beta_2 r}{d}\right)^2} e^{2 \pi i \beta_1x} e^{2 \pi i
\beta_2y} e^{-2 \pi i \beta_1 \frac{a}{b}} e^{-2
\pi i \beta_2 \frac{c}{d}},\\
\hat{\varphi_1}(\beta,X)&=& -\frac{i \beta_1 r}{2 b^2} \cdot
\hat{\varphi_2}(\beta,X), \\\hat{\varphi_3}(\beta,X)&=& -\frac{i
\beta_2 r}{2 d^2} \cdot \hat{\varphi_2}(\beta,X).\end{eqnarray*}

By the Eichler-Shimura-Harder isomorphism $[\eta]$ can be
represented by a $\Gamma_0(\frak{n})$-invariant harmonic
differential on $\mathbf{H}_3$ of the form $-f_0 \frac{dz}{r}+f_1
\frac{dr}{r}+f_2 \frac{d\overline z}{r}$, where
$$f=(f_0,f_1,f_2): \mathbf{H}_3 \to \C^3$$ is
a weight 2 cusp form for $\Gamma_0(\frak{n})$ (see \cite{U95}
Th\'{e}or\`{e}me 3.2, \cite{HaGL2} \S 3.6 and \cite{CW94}). By
\cite{CW94} (with a correction from \cite{By99} Proposition 100)
such a function $f$ has a Fourier expansion (about the cusp
$(0,\infty)$) of the form
$$f(z,r)=\sum_{\xi \in \vartheta^{-1}} c(\xi) r^2 \mathbf{K}(4
\pi |\xi| r) \cdot {\rm diag}(\xi/|\xi|,1,\overline \xi/|\xi|)
\Psi(\xi z),$$ where $\vartheta$ is the different of $F$,
$\mathbf{K}(t)$ is the vector-valued K-Bessel function
$$\mathbf{K}(t)=(-\frac{1}{2}i K_1(t), K_0(t), \frac{1}{2}i
K_1(t)),$$ and $\Psi(z)={\rm exp}(-2 \pi i (z+ \overline z)).$

By Poisson summation we therefore obtain
$$I:=\int_D \eta \wedge \varphi(X) =\int_{\Gamma_{\infty} \backslash D}
{\rm vol}(\C/\mathcal{O})^{-1} \sum_{\beta \in \vartheta^{-1}}
((-f_0+f_2) \hat{\varphi_1}(\beta,X) + f_1 \hat{\varphi_2}(\beta,X)
- i (f_0+f_2) \hat{\varphi_3}(\beta,X)) dx\wedge dy \wedge dr.$$ We
pick a fundamental domain for $\Gamma_{\infty} \backslash D$ and
integrate with respect to $z=x+iy$, which singles out the Fourier
coefficients:
$$I=e^{- \pi
(b^2+d^2)} \sum_{\xi \in \vartheta^{-1}} c(\xi)
\Psi\left(\xi(\frac{a}{b} + i \frac{c}{d})\right) \int_0^{\infty} r
e^{-2 \pi((\frac{\xi_1 r}{b})^2+(\frac{\xi_2 r}{d})^2)} \left(
K_0(4\pi|\xi|r) + \left(\frac{\xi_1^2}{b^2}+
\frac{\xi_2^2}{d^2}\right) \frac{r}{|\xi|} K_1(4 \pi |\xi|r)\right)
dr.$$ We now use integration by parts for $$\int_0^{\infty} r e^{-2
\pi((\frac{\xi_1 r}{b})^2+(\frac{\xi_2 r}{d})^2)}
\left(\frac{\xi_1^2}{b^2}+ \frac{\xi_2^2}{d^2}\right)
\frac{r}{|\xi|} K_1(4 \pi |\xi|r) dr
$$ and the following two properties of the K-Bessel function (see
\cite{MOS} 3.1.1 and 3.2):$$\frac{d}{dx}(x K_1(x))=-x K_0(x)$$ and
$${\rm lim}_{x \to 0^+} x K_1(x)=1.$$ We conclude that both sides of (\ref{Thom}) equal
$$\frac{1}{(4\pi)^2}{\rm exp}(-\pi {\rm tr}(X,X)) \sum_{\xi \in \vartheta^{-1}} c(\xi)
\Psi\left(\xi (\frac{a}{b} + i \frac{c}{d})\right) {\rm
Nm}(\xi)^{-1}.$$
\end{proof}

\section{Arithmetic properties}

\subsection{Arithmetic Siegel modular forms}

For $\tau=u+iv \in \mathbf{H}_n$ and $g_f \in G(\A_f)$ we define
$$\theta_{\varphi}(\eta)(\tau,g_f)= {\rm det}(v)^{-m/4} \cdot \theta_{\varphi}(\eta)(g_{\tau} g_f),$$
where $g_{\tau}=\begin{pmatrix} v^{1/2} & u v^{-1/2} \\ 0& v^{-1/2}\end{pmatrix} \in G_1(\Real)$.

We recall from \cite{Ha84} (2.2.2.1) that any such holomorphic automorphic form $\phi$ on $\mathbf{H}_n \times (G(\A_f)/L)$ has a
Fourier-Jacobi expansion of the form
$$\phi(\tau,g_f)=\sum_{\beta \in {\rm Sym}_n(\Q)} a_{\beta}(\tau,g_f),$$ where
$$a_{\beta}(\tau,g_f)= \int_{U(\Q) \backslash U(\A)} \phi(u(\tau,g_f)) {\rm exp}(-2 \pi i {\rm tr}(\beta u)) du.$$
One checks (see e.g. \cite{Sug}(1-19)) that
$$a_{\beta}(\tau,g_f)=a_{\beta}(g_f) \cdot {\rm exp}(2 \pi i {\rm
tr}(\beta \tau))$$ for some $a_{\beta}(g_f) \in \C$.

Let $p$ be any rational prime. Fix embeddings $\overline \Q \hookrightarrow \overline \Q_p
\hookrightarrow \C$. Let $M \subset \overline \Q$ be a number field
and $\wp$ the prime of $M$ above $p$. By the $q$-expansion principle
(see \cite{CF}) arithmetic holomorphic automorphic forms are
characterized by their Fourier-Jacobi expansion. We therefore call
$f$ \textit{algebraic} (resp. $M$-rational, resp. $\wp$-integral) if
$a_{\beta}(g)$ lies in $\overline \Q$ (resp. $M$, resp.
$\mathcal{O}_{M_{\wp}}$) for all $\beta \in {\rm Sym}_n(\Q)$ and all
$g\in G(\A_f)$. (In fact, one only needs to consider finitely many
$g \in G(\A_f)$, see \cite{Ha84} \S3 or \cite{T91} \S3.)

Since $R(\A_f)$ preserves $S(V(\A_f)^n)_{\overline \Q}$, Proposition  \ref{SMF} and Theorem \ref{Fourier} imply:
\begin{cor}
If $[\eta] \in H^{(s-n)t}_c(S_K,\overline \Q)$
and $\varphi \in S(V(\A_f)^n)_{\overline \Q}$ a $K_1$-invariant
Schwartz function then $\theta_{\varphi}(\eta)(\tau,g_f)$ is an
algebraic holomorphic Siegel modular form of weight $m/2$.
\end{cor}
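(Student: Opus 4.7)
The plan is to combine Proposition \ref{SMF} and Theorem \ref{Fourier} with the $q$-expansion principle recalled just above the corollary. Proposition \ref{SMF} already guarantees that $\theta_{\varphi}(\eta)$ is a holomorphic Siegel modular form of weight $m/2$, so the content of the corollary is the algebraicity of the Fourier--Jacobi coefficients $a_{\beta}(g_f)$ for all $\beta \in \mathrm{Sym}_n(\Q)$ and all $g_f \in G(\A_f)$.

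First I would match normalizations. The explicit formula $W_\beta(g_\tau) = \det(v)^{m/4} \exp(2\pi i\, \mathrm{tr}(\beta\tau))$ recalled after (\ref{whittaker}), together with the defining relation
$$\theta_{\varphi}(\eta)(\tau, g_f) = \det(v)^{-m/4} \, \theta_{\varphi}(\eta)(g_\tau g_f),$$
substituted into Theorem \ref{Fourier}, immediately gives
$$a_{\beta}(g_f) = \int_{Z(\beta, \varphi', K_1^{h})} \eta(h), \qquad \varphi' = \omega(g_f, h)\varphi,$$
for any $h \in H(\A_f)$ with $\lambda(h) = \lambda(g_f)$. Thus the task reduces to showing that each such period integral lies in $\overline{\Q}$.

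Next I would verify that $\varphi' \in S(V(\A_f)^n)_{\overline{\Q}}$: this is precisely the preservation of the $\overline{\Q}$-structure by $R(\A_f)$ noted just before the corollary. Then, by the very definition of the weighted cycle,
$$Z(\beta, \varphi', K_1^h) = \sum_j \sum_{x \in \Omega_\beta(\Q) / \Gamma_j'} \varphi'(h_j^{-1} x)\, c(U(x), h_j, K_1^h)$$
is a finite $\overline{\Q}$-linear combination of integral topological cycles in $H_{(s-n)t}(S^1_{K_1^h}, \partial S^1_{K_1^h}; \Z)$ (finiteness coming from the compact support of $\varphi'$ and the fact that $\beta > 0$ cuts out only finitely many $\Gamma_j'$-orbits in the support of $\varphi'(h_j^{-1} \cdot )$). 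Pairing against the compactly supported Betti class $[\eta] \in H^{(s-n)t}_c(S_K, \overline{\Q})$ via the standard Poincar\'e--Lefschetz pairing between $H^*_c$ and relative homology then yields a value in $\overline{\Q}$.

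Finally, the $q$-expansion principle of \cite{CF} as recalled in the paragraph before the corollary promotes this coefficient-wise algebraicity (checked on the finitely many $g_f$ one needs) to the statement that $\theta_{\varphi}(\eta)(\tau, g_f)$ is an algebraic Siegel modular form. The main subtlety I expect to have to handle carefully is Step 3: the cycles $c(U(x), h_j, K_1^h)$ need not be compact (they can be infinite geodesics in the $(3,1)$ setting), so one must know that the Fourier expansion of Theorem \ref{Fourier} converges and that the pairing against such non-compact cycles is well-defined and computes the Betti--de Rham integral. This is exactly what Thom's Lemma for $\mathbf{H}_3$, established in Section \ref{thomh3}, is used for inside the proof of Theorem \ref{Fourier}; once invoked, the algebraicity of the pairing is formal from the fact that $[\eta]$ is $\overline{\Q}$-rational and the cycles are $\overline{\Q}$-rational combinations of integral topological chains.
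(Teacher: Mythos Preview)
Your proposal is correct and follows exactly the route the paper indicates: the paper's entire argument is the one-line remark ``Since $R(\A_f)$ preserves $S(V(\A_f)^n)_{\overline \Q}$, Proposition~\ref{SMF} and Theorem~\ref{Fourier} imply'' the corollary, and you have simply unpacked this into the explicit computation of $a_\beta(g_f)$ as a $\overline\Q$-linear combination of periods of a $\overline\Q$-rational Betti class over integral cycles. The discussion of Thom's Lemma and the $q$-expansion principle is accurate but not strictly needed at this point, since both are already absorbed into Theorem~\ref{Fourier} and the definition of ``algebraic'' respectively.
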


\begin{rem}
We can, in fact, replace $\overline \Q$ by $\Q^{\rm ab}$ in the
above statement since the Weil representation is defined over $\Q^{\rm ab}$. In the next section we will give an example of a suitable choice of $\varphi$ taking values in
$\mathcal{O}_{M_{\wp}}$ for which we can prove the $\wp$-integrality of
$\theta_{\varphi}(\eta)$ for $\eta \in {\rm
im}(H^{(s-n)t}_c(S_K,\mathcal{O}_{M_{\wp}}) \to
H^{(s-n)t}_c(S_K,M_{\wp}))$.
\end{rem}

\subsection{Definition of Schwartz function} \label{schwartz}
Recall from Section \ref{orthdefn} the quadratic character $\chi_V$ associated to $V$. Let $X$ be an integral lattice on $V$ and put $X_{\ell}=X
\otimes_{\Z} \Z_{\ell}$ for every prime $\ell$ of $\Z$. We define $\varphi=\prod_{\ell} \varphi_{\ell} \in S(V(\A_f)^n)_{\Z}$ by
putting $\varphi_{\ell}$ equal to the characteristic function of $X_{\ell}^n$. We also fix $K=\prod_{\ell} K_{\ell} \subset H(\A_f)$ with
$$K_{\ell}=\{h \in H(\Q_{\ell}) | h X_{\ell}=X_{\ell} \}.$$
Let $\check{X}_{\ell}=\{x \in V \otimes \Q_{\ell} | (x,y) \in \Z_{\ell}  \, \forall y \in X_{\ell} \}$ and let $({\ell}^{-n_{\ell}})$ be the $\Z_{\ell}$-module
generated by $\{(x,x) | x \in \check{X}_{\ell} \}.$ Set $N=\prod_{\ell} {\ell}^{n_{\ell}}$ (the ``\textit{level of the lattice} $X$") and define $$L(N)=\left\{ \begin{pmatrix}
  A &B\\C&D\end{pmatrix} \in G(\hat{\Z}) | C \equiv 0 \mod{N \hat{\Z}} \right\} \subset G(\A_f).$$

It is easy to see that $\lambda(K) \subset \hat{\Z}^*$. We recall from Eichler the following lemma:

\begin{lem}[\cite{Eich} Satz 11.2]
Assume that $X_{\ell}$ is a $\mathfrak{A}$-maximal lattice for some fractional ideal $\mathfrak{A}$ of $\Z_{\ell}$. If $h \in H(\Q_{\ell})$ satisfies $\lambda(h) \in \Z_{\ell}^*$ then there exists $k \in K_{\ell}$ such that $\lambda(k)=\lambda(h)$.
\end{lem}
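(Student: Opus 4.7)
The plan is to use two structural facts about $\mathfrak{A}$-maximal lattices over $\Q_\ell$: first, that similitudes with unit multiplier preserve the class of $\mathfrak{A}$-maximality, and second, the classical uniqueness (up to isometry) of such lattices. Concretely, given $h \in H(\Q_\ell)$ with $\lambda(h) \in \Z_\ell^*$, I would construct the desired $k \in K_\ell$ by left-multiplying $h$ with an appropriate element of $H_1(\Q_\ell) = \mathrm{O}(V)(\Q_\ell)$, noting that such a correction does not change the multiplier.

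First, I would verify that the transformed lattice $h X_\ell$ is itself $\mathfrak{A}$-maximal. For any $x \in X_\ell$, $(hx,hx) = \lambda(h)(x,x) \in \lambda(h)\mathfrak{A} = \mathfrak{A}$ because $\lambda(h)$ is a unit. For maximality, if $L \supset h X_\ell$ is a larger lattice whose form values still lie in $\mathfrak{A}$, then $h^{-1}L \supset X_\ell$ has form values in $\lambda(h)^{-1}\mathfrak{A} = \mathfrak{A}$, forcing $h^{-1}L = X_\ell$ by maximality of $X_\ell$. So $h X_\ell$ belongs to the same class of $\mathfrak{A}$-maximal lattices on $V \otimes \Q_\ell$ as $X_\ell$.

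Next, I would invoke the local theorem that all $\mathfrak{A}$-maximal lattices in a nondegenerate quadratic space over $\Q_\ell$ are conjugate under the isometry group $\mathrm{O}(V)(\Q_\ell)$ (this is the substantive input, proved via Witt-type extension/cancellation arguments for maximal lattices, as in Eichler or O'Meara \textsection 91). This yields $g \in H_1(\Q_\ell)$ with $g \cdot (hX_\ell) = X_\ell$. Setting $k := gh \in H(\Q_\ell)$, we have $kX_\ell = X_\ell$, so $k \in K_\ell$, while $\lambda(k) = \lambda(g)\lambda(h) = \lambda(h)$ since $g$ is an isometry.

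The main obstacle is the uniqueness statement in the previous paragraph, which is the nontrivial local content and the reason the hypothesis that $X_\ell$ be $\mathfrak{A}$-maximal is essential; without maximality one could only conclude the existence of $k$ up to passing to a smaller open compact subgroup. The rest of the argument is purely formal manipulation with the multiplier character, and the hypothesis $\lambda(h) \in \Z_\ell^*$ enters only to ensure that rescaling the form by $\lambda(h)$ leaves the ideal $\mathfrak{A}$ unchanged.
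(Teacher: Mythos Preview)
Your proposal is correct and is the standard argument. Note that the paper itself does not give a proof of this lemma but simply cites Eichler's Satz~11.2; your sketch unpacks exactly the content behind that citation, namely that a similitude with unit multiplier preserves $\mathfrak{A}$-maximality and that $\mathrm{O}(V)(\Q_\ell)$ acts transitively on the set of $\mathfrak{A}$-maximal lattices (Eichler, or equivalently O'Meara \S91:2), from which the existence of $k=gh\in K_\ell$ with $\lambda(k)=\lambda(h)$ follows at once.
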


\begin{prop}
  Let $[\eta] \in{\rm im}(H^{(s-n)t}_c(S_K,\mathcal{O}_{M_{\wp}}) \to H^{(s-n)t}_c(S_K,M_{\wp}))$. Assume that $X_{\ell}$ is an $\ell^{k_{\ell}}$-maximal lattice for all $\ell$ for some $k_{\ell}$ and $\lambda(H(\Q_p))\supset\Z_p^*$.
  Then $\theta_{\varphi}(\eta)$ is a $\wp$-integral holomorphic Siegel modular form of weight $m/2$ and central character $\chi_V^n$, and (almost) level $N$ and character $\chi_V$ in the sense that \begin{equation} \label{trafo} \theta_{\varphi}(\eta)(gk)=\chi_V({\rm det} A) \theta_{\varphi}(\eta)(g) \text{ for } k=\begin{pmatrix}   A&B\\C&D  \end{pmatrix} \in L(N) \cap G(\A_f)^+.\end{equation}
\end{prop}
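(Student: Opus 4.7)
The weight $m/2$, holomorphy, and central-character $\chi_V^n$ claims are already contained in Proposition \ref{SMF}, so the work is in establishing (a) the transformation (\ref{trafo}) under $L(N) \cap G(\A_f)^+$ and (b) the $\wp$-integrality of the Fourier--Jacobi coefficients of $\theta_\varphi(\eta)$.

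For (a), fix $k \in L(N) \cap G(\A_f)^+$; then $\lambda(k) \in \hat\Z^*$. Under the $\ell^{k_\ell}$-maximal lattice hypothesis together with $\lambda(H(\Q_p)) \supset \Z_p^*$, Eichler's lemma produces $h_k \in K$ with $\lambda(h_k) = \lambda(k)$; in Definition \ref{thetaDefn} applied to $gk$ I then take $h' = hh_k$. Since $h_{k,f}\in K$, we have $K_1^{h'_f} = K_1^{h_f}$ and $\eta(\cdot h'_f)=\eta(\cdot h_f)$, while $K$-invariance of $\varphi$ combined with $|\lambda(h_{k,f})|=1$ yields $L(h_k)\varphi=\varphi$. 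Using Lemma \ref{commutation} and the multiplicativity of the extended Weil representation on $R(\A)$, the identity reduces to the purely local statement $\omega(k_1)\varphi = \chi_V({\rm det}\, A)\varphi$ on $V(\A_f)^n$. This is the standard theta-level computation: decompose $k_1$ into a Levi factor ${\rm diag}(A,A^{-t})$ (contributing $\chi_V({\rm det}\, A)$ since $A\in {\rm GL}_n(\hat\Z)$ preserves $X^n$ and $|{\rm det}\,A|=1$), an upper-unipotent block (contributing a phase $\psi({\rm tr}(B'Q(x)))$ which is trivial on $X^n$ because $Q(X)\subset\Z$), and a lower-unipotent block with $C \equiv 0 \pmod N$ (acting trivially on $\varphi$ precisely because $NX \subset \check X$).

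For (b), I use the Fourier expansion of Theorem \ref{Fourier}:
\[
a_\beta(g_f) \;=\; \sum_j \sum_{x \in \Omega_\beta(\Q)/\Gamma_j'} \varphi'(h_j^{-1} x)\,\int_{c(U(x),h_j,K_1^{h})} \eta,
\]
where $\varphi' = \omega(g_f,h)\varphi$. By (a) combined with left-$G(\Q)$-invariance, and using strong approximation for ${\rm Sp}_n$ together with $\lambda(L(N))=\hat\Z^*$ to conclude $G(\A_f)=G(\Q)^+L(N)$, it suffices to check $\wp$-integrality at $g_f=1$. There $h\in K_1$ so $\varphi'=L(h)\varphi=\varphi$ is $\Z$-valued, and each geometric integral $\int_{c(U(x),h_j,K_1)}\eta$ lies in $\mathcal{O}_{M_\wp}$ because the cycle represents an integral relative-homology class and $\eta$ lifts to $\mathcal{O}_{M_\wp}$-coefficients. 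The $q$-expansion principle then gives $\wp$-integrality of $\theta_\varphi(\eta)$. I expect the main obstacle to be the local Weil-representation calculation in (a) at primes $\ell\mid N$, where the lower-unipotent block of $k_1$ interacts non-trivially with the dual lattice $\check X_\ell$ and one needs the $\ell^{k_\ell}$-maximality to ensure the Weil index contributes exactly $\chi_V({\rm det}\,A)$ and nothing more.
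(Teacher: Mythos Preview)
Your treatment of (\ref{trafo}) follows the paper's argument closely: reduce via Definition \ref{thetaDefn} and Eichler's lemma to $\theta_\varphi(\eta)(gk)=\theta_\varphi(\eta)(gk_1)$, then invoke the local identity $\omega(k_1)\varphi=\chi_V(\det A)\varphi$ (which the paper cites as Yoshida's Lemma \ref{Yoslemma}, while you sketch its Iwahori decomposition proof). One small point: the hypothesis $\lambda(H(\Q_p))\supset\Z_p^*$ plays no role in this step---Eichler's lemma already gives $\lambda(K_\ell)=\lambda(H(\Q_\ell))\cap\Z_\ell^*$, which suffices since $k\in L(N)\cap G(\A_f)^+$ has $\lambda(k)\in\hat\Z^*\cap\lambda(H(\A_f))$.

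The gap is in your $\wp$-integrality argument. Your reduction to $g_f=1$ relies on the claimed decomposition $G(\A_f)=G(\Q)^+L(N)$ together with (\ref{trafo}); but (\ref{trafo}) is only established for $k\in L(N)\cap G(\A_f)^+$, and in general $L(N)\not\subset G(\A_f)^+$ since $\Z_\ell^*\not\subset\lambda(H(\Q_\ell))$ at primes $\ell$ where $V$ fails to represent all units. So strong approximation only reduces you to checking Fourier coefficients at all $k\in L(N)$, and for $k\in L(N)\cap G(\Q)G(\A)^+$ \emph{not} lying in $G(\A_f)^+$ you cannot simply apply (\ref{trafo}). The paper handles precisely this residual case: it factors such $k$ as $\begin{pmatrix}1&0\\0&\lambda\end{pmatrix}g\,z_\infty^{-1}z_\infty$ with $\lambda\in\Q_{>0}$, $g\in G(\A)^+$, and crucially $|\lambda|_p=1$, then applies Lemma \ref{commutation} and Lemma \ref{Yoslemma} to the element $\begin{pmatrix}1&0\\0&\lambda(k)^{-1}\end{pmatrix}k$. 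The condition $|\lambda|_p=1$---equivalently $|\lambda(h)|_p=1$ for the chosen $h$---is exactly what guarantees that $L(h)$ does not introduce denominators at $p$ in $\varphi'$, and it is available only because of the hypothesis $\lambda(H(\Q_p))\supset\Z_p^*$. Since you never invoke this hypothesis in part (b), your argument as written cannot close the case $k\notin G(\A_f)^+$.
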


\begin{proof}
By the definition of $\theta_{\varphi}(\eta)$ it suffices to check (\ref{trafo}) for $g \in G(\A)^+$.
  Note that under our assumption on the lattice $X$ and the preceding lemma we then have $\theta_{\varphi}(\eta)(gk)=\theta_{\varphi}(\eta)(gk_1)$ by Definition \ref{thetaDefn}. Therefore the statement about the level and character follows from the following Lemma:
  \begin{lem}[{\cite[Lemma~2.1]{Y2}}] \label{Yoslemma}
    $\omega(k_1) \varphi= \chi_V({\rm det} A) \varphi$ for $k_1=\begin{pmatrix} A &B\\C&D\end{pmatrix} \in L(N) \cap G_1(\A_f)$.
  \end{lem}
  By strong approximation we have $G(\A)=G(\Q) L(N) G(\Real)_+$ and so it suffices to check $\wp$-integrality on $L(N)$,
  and by definition of $\theta$ on $L(N) \cap G(\Q) G(\A)^+$.
  If $k \in L(N) \cap G(\A_f)^+$ this follows directly from Theorem \ref{Fourier} and Lemma \ref{Yoslemma}. 
  In general, write $k=\begin{pmatrix} 1&0\\0& \lambda\end{pmatrix} g z_{\infty}^{-1} z_{\infty}$ for $g \in G(A)^+$, $\lambda \in \Q_{>0}$ with $|\lambda|_p=1$ and $z_{\infty}=\lambda^{-1/2} I_m \in G(\Real)^+$. Since the central character has finite order we then have $\theta_{\varphi}(\eta)(k)=\theta_{\varphi}(\eta)(g z_{\infty}^{-1})$ and by Theorem \ref{Fourier} this equals
  $$\sum_{\beta > 0} W_{\beta}(g_{\infty}z_{\infty}^{-1})\cdot \int_{Z(\beta,\varphi',K^{h}_1)} \eta(h)$$ for $h\in H(A_f)$ with $\lambda(h)=\lambda(g z_{\infty}^{-1})$ and $\varphi'=\omega(g_f, h) \varphi=L(h) \omega(\begin{pmatrix} 1&0\\0& \lambda(k)^{-1}\end{pmatrix} k) \varphi$, the latter equality using Lemma \ref{commutation}. Applying Lemma \ref{Yoslemma} for $\begin{pmatrix} 1&0\\0& \lambda(k)^{-1}\end{pmatrix} k$ and noting $|\lambda(h)|_p=1$ we deduce the $\wp$-integrality of $\theta_{\varphi}(\eta)$.
\end{proof}

\begin{rem}
  Assume that for $\ell \nmid N$ the form $\eta$ is an eigenfunction for the Hecke algebra $\mathcal{H}(H_1(\Q_{\ell})//H_1(\Z_{\ell}))$ and that the Witt index of $V \otimes \Q_{\ell}$ (the dimension of the maximal isotropic subspace) is less than or equal to $n$. Rallis' generalisation of the Eichler commutation relation (\cite{Ra82} \S 4.B) implies that $\theta_{\varphi}(\eta)|_{G_1(\A)}$ is an eigenfunction for the Hecke algebra $\mathcal{H}(G_1(\Q_{\ell})//G_1(\Z_{\ell}))$.
\end{rem}

\subsection{Orthogonal Spaces of dimension 4} \label{Hecke}
In this section we restrict to quadratic spaces $V$ with dimension $m=4$ and signature $(3,1)$ and analyze the Hecke properties of the theta lift.

We refer the reader to Section 2 of \cite{Ro01} for a summary of results of four dimensional quadratic spaces.
We consider the following examples:  Let $F$ be an imaginary quadratic field with ring of integers $\mathcal{O}$
and for every place $v$ of $F$ write $\frak{q}_v \subset \mathcal{O}$ for the corresponding prime ideal.
For $D_0$ a quaternion algebra over $\Q$ put $D=D_0 \otimes_{\Q} F$, write $*$ for the main involution of $D$ and denote the natural extension of the non-trivial automorphism of $F$ over $\Q$ to the semi-automorphism of $D$ by $-$. Put $$V=\{x \in D | \overline x=x^* \}$$ with quadratic form $N(x)=x x^*$. This is a four dimensional quadratic space of signature $(3,1)$ since $D$ necessarily splits at $\infty$.

Let $N_1= \prod_{D_v \text{ramified}} \mathfrak{q}_v $, which is a product of split primes of $\Z$.
Let $R \subset D$ be an Eichler order of level $N=N_1 N_2$ for $N_2 \in \Z$ squarefree and coprime to $N_1$, i.e. $R_v$ is a maximal order in $D_v$
 for all $v \nmid N_2$ and is conjugate to $$\left\{\begin{pmatrix}
   a&b\\c&d \end{pmatrix} \in M_2(\mathcal{O}_v) | c \equiv 0 \mod{\mathfrak{q}_v} \mathcal{O}_v \right\}$$ for $v\mid N_2$ (where $D_v$ has been identified with $M_2(F_v)$).

We define a lattice $X:=\{x \in R | \overline x=x^*\} \subset V$, with corresponding $\varphi$ and $K$ as in Section \ref{schwartz}.
Note that the level of $X$ (as defined in Section \ref{schwartz}) equals $N$. The lattices $X_{\ell}$ are all $\ell$-maximal.

From \cite{Ro01} and \cite{Ro94} we deduce that $L(N) \cap G(\A_f)^+$ differs from $L(N)$ only at the places $\ell$ ramified in $F/\Q$. For $\ell$ split in $F/\Q$ one has $\lambda(H(\Q_{\ell}))=\Q_{\ell}^*$, so $G(\Q_{\ell})=G(\Q_{\ell})^+$. From the explicit description of $V \otimes \Q_{\ell}$ given on \cite{Ro01} p.273 for $\ell \mid {\rm disc}(D)$ one deduces the Witt decomposition $V \otimes \Q_{\ell} \cong (F \otimes \Q_{\ell}) \bot \mathbf{H}$ (see also \cite{Ro94} Section 3). This implies that $\lambda(H(\Q_{\ell}))={\rm Nm}^{\Q_{\ell}\otimes F}_{\Q_{\ell}}((\Q_{\ell} \otimes F)^*)$, so for $\ell$ inert in $F/\Q$ we have $G(\Z_{\ell}) \subset G(\Q_{\ell})^+$, whereas $[G(\Z_{\ell}):G(\Q_{\ell})^+ \cap G(\Z_{\ell})]=2$ for $\ell$ ramified in $F/\Q$.

\begin{cor}
  Assume $p \nmid {\rm disc}(F/\Q)$. Let $[\eta] \in{\rm im}(H^{(s-n)t}_c(S_K,\mathcal{O}_{M_{\wp}}) \to H^{(s-n)t}_c(S_K,M_{\wp}))$.
  Then $\theta_{\varphi}(\eta)$ is a $\wp$-integral holomorphic Siegel modular form of weight $2$ and trivial central character, and (almost) level $N$ and character $\chi_V$ in the sense that \begin{equation}  \theta_{\varphi}(\eta)(gk)=\chi_V({\rm det} A) \theta_{\varphi}(\eta)(g) \text{ for } k=\begin{pmatrix}   A&B\\C&D  \end{pmatrix} \in L(N) \text{ with } \lambda(k) \in {\rm Nm}^{\A_F^*}_{\A^*}(A_F^*).\end{equation}
\end{cor}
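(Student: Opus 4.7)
The plan is to invoke the preceding proposition in the concrete quaternionic setting of Section \ref{Hecke}, then to translate the $G(\A_f)^+$ condition into the explicit norm condition that appears in the statement.

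First, I verify the hypotheses of that proposition. With $V$, $X$, $\varphi$, $K$ constructed from the Eichler order $R$ of level $N = N_1 N_2$, each local lattice $X_\ell$ is $\ell$-maximal (the level of $X$ is exactly $N$), so the maximality assumption holds uniformly with $k_\ell = 1$. For the surjectivity condition $\lambda(H(\Q_p)) \supset \Z_p^*$: the hypothesis $p \nmid \mathrm{disc}(F/\Q)$ forces $p$ to be either split or inert in $F$. In the split case $\lambda(H(\Q_p)) = \Q_p^*$ by the paragraph immediately preceding the corollary. In the inert case, that same paragraph gives $\lambda(H(\Q_p)) = \mathrm{Nm}^{F_p}_{\Q_p}(F_p^*)$, and since $F_p/\Q_p$ is unramified the local norm is surjective on units, so $\Z_p^* \subset \lambda(H(\Q_p))$.

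Second, I read off the conclusions from the preceding proposition. It gives the $\wp$-integrality directly, the weight is $m/2 = 2$, and the central character is $\chi_V^n = \chi_V^2$, which is trivial because $\chi_V$ takes values in $\{\pm 1\}$. The proposition also provides the transformation law (\ref{trafo}) for all $k \in L(N) \cap G(\A_f)^+$.

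Finally, I match $L(N)\cap G(\A_f)^+$ with the subgroup of $L(N)$ singled out by $\lambda(k) \in \mathrm{Nm}^{\A_F^*}_{\A^*}(\A_F^*)$. Using the local descriptions $\lambda(H(\Q_\ell)) = \mathrm{Nm}^{F \otimes \Q_\ell}_{\Q_\ell}((F\otimes \Q_\ell)^*)$ valid at every finite place (recalled in Section \ref{Hecke} via the Witt decomposition), one gets globally $\lambda(H(\A_f)) = \mathrm{Nm}(\A_{F,f}^*)$; together with $\lambda(H(\Real)) = \Real_{>0}$ and the fact that $\lambda(k)$ is a finite idele, the condition $k \in G(\A_f)^+$ reduces to $\lambda(k) \in \mathrm{Nm}(\A_F^*)$. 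The only mildly delicate step is this last identification at primes $\ell$ ramified in $F/\Q$, where $[G(\Z_\ell) : G(\Z_\ell) \cap G(\Q_\ell)^+] = 2$; that index-$2$ discrepancy is precisely what the norm condition filters out, so no extra argument is needed. I expect this bookkeeping at the ramified primes to be the main (albeit routine) obstacle.
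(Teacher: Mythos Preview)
Your proposal is correct and follows essentially the same approach as the paper, which gives no explicit proof for this corollary: it is meant to be read off directly from the preceding proposition together with the paragraph just before, where the local images $\lambda(H(\Q_\ell))$ are identified. Your three steps---checking the maximality and $\lambda(H(\Q_p))\supset\Z_p^*$ hypotheses under $p\nmid\mathrm{disc}(F/\Q)$, specializing the weight and central character to $m/2=2$ and $\chi_V^2=1$, and translating $L(N)\cap G(\A_f)^+$ into the norm condition via the uniform local formula $\lambda(H(\Q_\ell))=\mathrm{Nm}((F\otimes\Q_\ell)^*)$---are exactly what the paper intends.
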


Note that $D^*$ defines an algebraic group over $F$. Denote its restriction of scalars from $F$ to $\Q$ by $\tilde D^*$. It acts on $V$ via $x \mapsto g x \overline g^*$. We refer the reader to \cite{HST} \S 1 for a description of the relationship between cuspidal automorphic forms of ${\rm GO}(V)_{\A}=H_{\A}$ and $\tilde D^*_{\A}$ for $D=M_2(F)$. One can generalize this to all our quaternion algebras $D$ using the characterisation of the special similitude group ${\rm GSO}(V)$ given in \cite {Ro01} Theorem 2.3 and Proposition 2.7.
We are going to use that an automorphic form for $H_{\A}$ arises from an automorphic form $f_{\eta} \in \mathcal{A}(\tilde D^*_{\A})$ (not uniquely, see \cite{HST} Proposition 2).

For every place $v$ of $F$ such that $v \nmid N$ we define Hecke operators $T'(v)$ acting on $f_{\eta} \in \mathcal{A}(\tilde D^*_{\A})$ as follows: We may assume that $R_v=R \otimes_{\mathcal{O}} \mathcal{O}_v$ is mapped onto $M_2(\mathcal{O})$ when we fix a splitting $D \otimes F_v \cong M_2(F_v)$. Let $\pi_v$ be a prime element of $F_v$ and let $R_v^* \begin{pmatrix}
  1&0\\0& \pi_v\end{pmatrix} R_v^*= \bigcup_i h_i R_v^*.$ Then define $$(T'(v)f_{\eta})(h)=\sum_i f_{\eta}(h h_i) \text{ for } h \in \tilde D^*_{\A}.$$

Now consider the Hecke action on Siegel modular forms. Let $L(N)=\prod_{\ell} L(N)_{\ell}$ for $L(N)_{\ell} \subset G(\Z_{\ell})$ with $L(N)_{\ell} =G(\Z_{\ell})$ for ${\ell} \nmid N$. For any $\ell$ the double coset $L(N)_{\ell} M L(N)_{\ell}= \bigcup_i g_i L(N)_{\ell}$ for $M \in G(\Q_{\ell})$ acts on $\phi:G(\A) \to \C$ by $(L(N)_{\ell} M L(N)_{\ell}) \phi(g)=\sum_i \phi(g g_i)$.
We single out the operators $$T_{\ell}=L(N)_{\ell} {\rm diag}(\underbrace{{\ell}, \ldots, {\ell}}_{n},\underbrace{1,\ldots, 1}_n) L(N)_{\ell}$$ and $$R_{\ell}^{(s)}=L(N)_{\ell} {\rm diag}(\underbrace{{\ell},\ldots, \ell}_{n-s},\underbrace{1,\ldots, 1}_{s},\underbrace{{\ell},\ldots, \ell}_{n-s}, \underbrace{{\ell}^2, \ldots, \ell^2}_s) L(N)_{\ell} \text{ for } 0 \leq s \leq n.$$

\begin{thm}
   Let $[\eta] \in H^{(s-n)t}_c(S_K,\C) $ correspond to  $f_{\eta} \in \mathcal{A}(\tilde D^*_{\A})$ and
   let $\varphi=\prod_{\ell} \varphi_{\ell} \in S(V(\A_f)^n)_{\Z}$ with $\varphi_{\ell}$ the characteristic function of $X_{\ell}$.

   Consider $\ell \nmid N_2$ unramified in $F/\Q$.  If ${\ell} \nmid N_1$  then assume that $f_{\eta}$ is an eigenform for the Hecke operator $T'(v)$ with eigenvalue $\lambda_v$ for all $v \mid {\ell}$.  If ${\ell} \mid N_1$ then assume that $f_{\eta}$ is an eigenform with eigenvalue $\pm 1$ for the Atkin-Lehner involution given by right multiplication by $(\pi,1)$  for $\pi$ a prime element of $D_{0,{\ell}}$.

   Then $\theta_{\varphi}(\eta)$ is an eigenfunction for the Hecke operators $T_{\ell}$ and $R_{\ell}=R_{\ell}^{(1)}$ with eigenvalues for $n=2$ given by:
$$\begin{tabular}{c|c|c}
  & $T_{\ell}$ & $R_{\ell}$\\ \hline \hline
  $({\ell})=\frak{{l}} \overline{\frak{{l}}}$ split, ${\ell} \nmid N$ & ${\ell}(\lambda_{\frak{{l}}}+\lambda_{\overline{\frak{{l}}}})$ & $({\ell}^2-1)+{\ell} \lambda_{\frak{{l}}} \lambda_{\overline{\frak{{l}}}}$ \\ \hline
  ${\ell}$ inert, ${\ell} \nmid N$ & $0$ & $({\ell}^2+1)+{\ell} \lambda_{\ell}$\\ \hline
  ${\ell} \mid N_1$ & $\pm {\ell}$ & ${\ell} (\ell+1)$\\
\end{tabular}$$
\end{thm}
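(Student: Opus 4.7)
My plan is to invoke Rallis' generalisation of the Eichler commutation relation (cited in the remark preceding the theorem) at the unramified primes, and then treat the ramified primes $\ell \mid N_1$ by a direct local computation. The hypothesis $\ell \nmid N_2$ together with unramifiedness in $F/\Q$ ensures that $X_\ell$ is a maximal lattice in $V \otimes \Q_\ell$, and the Witt decomposition $V \otimes \Q_\ell \cong F_\ell \perp \mathbf{H}$ gives Witt index $\ge 2 = n$, so the Rallis result applies and shows that $\theta_\varphi(\eta)|_{G_1(\A)}$ is an eigenfunction of $\mathcal{H}(G_1(\Q_\ell)//G_1(\Z_\ell))$ at every $\ell \nmid N$, with eigenvalues controlled by the Satake parameters of $\eta$ on $H_1$.

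For $\ell\nmid N$, I would first translate the orthogonal Satake data of $\eta$ into Hecke eigenvalues of $f_\eta \in \mathcal{A}(\tilde D^*_\A)$, using the correspondence between $\GSO(V)$ and $\tilde D^*$ of \cite{HST} (generalised to non-split $D$ using Roberts' description in \cite{Ro01}). At a split prime $\ell = \mathfrak{l}\overline{\mathfrak{l}}$ with $\ell\nmid N_1$ the local group $\GSO(V)(\Q_\ell)$ decomposes into two factors and the eigenvalues $\lambda_\mathfrak{l}, \lambda_{\overline{\mathfrak{l}}}$ of $T'(\mathfrak{l})$ and $T'(\overline{\mathfrak{l}})$ give the two Satake parameters separately. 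At an inert prime, a single parameter $\lambda_\ell$ arises from a pair of Galois-conjugate Satake data. Combining this with the explicit formulas for the Satake transform of $T_\ell$ and $R_\ell$ on $\GSp_2$ (via the local theta correspondence) then yields the first two rows of the table: the vanishing of $T_\ell$ at an inert $\ell$ is forced by the Frobenius symmetry of the Satake parameters, while the constants $\ell^2 \pm 1$ in $R_\ell$ come from the unipotent part of the double coset.

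For $\ell \mid N_1$ the theta lift is no longer spherical at $\ell$, so Rallis' result does not apply directly. Instead I would decompose the double cosets $L(N)_\ell M L(N)_\ell$ (for $M$ the diagonal matrix defining $T_\ell$ or $R_\ell$) into left cosets $\bigsqcup_i g_i L(N)_\ell$, and compute $\sum_i \theta_\varphi(\eta)(g\, g_i)$ directly using the Fourier expansion of Theorem \ref{Fourier}. The Atkin-Lehner hypothesis pins down the local component of $f_\eta$ at $\ell$ up to the sign $\pm 1$, and the action of the local Weil representation on $\varphi_\ell = \mathbf{1}_{X_\ell^n}$ can then be tracked through the coset decomposition to produce the eigenvalues $\pm \ell$ and $\ell(\ell+1)$. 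This last step is the main obstacle, since at a prime where $D_0$ is ramified one loses the Hecke-algebra black box and must instead combine the explicit structure of the maximal order in the division algebra $D_{0,\ell}$, the action of its Atkin-Lehner normaliser on the Weil representation, and a careful match of Fourier coefficients between $T_\ell \theta_\varphi(\eta)$ (resp. $R_\ell \theta_\varphi(\eta)$) and $\theta_\varphi(\eta)$ itself.
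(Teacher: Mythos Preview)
Your outline is broadly on the right track but diverges from the paper's argument in two places, one of which contains a genuine gap.

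For $\ell \nmid N$ you invoke Rallis' commutation relation from the preceding remark. Two issues. First, the Witt-index hypothesis there is that the index be \emph{at most} $n$, not at least $n$ (and indeed at an inert prime the decomposition $V\otimes\Q_\ell \cong F_\ell \perp \mathbf{H}$ has Witt index $1$, not $2$); this is a harmless slip, since the hypothesis is still satisfied. More seriously, Rallis' result concerns the $G_1={\rm Sp}_n$ Hecke algebra acting on the restriction $\theta_\varphi(\eta)|_{G_1(\A)}$, whereas $T_\ell$ and $R_\ell$ are double cosets in ${\rm GSp}_n(\Q_\ell)$ with nontrivial similitude $\ell$ and $\ell^2$. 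You do not explain how to pass from the $G_1$-eigenfunction property to the similitude eigenvalues; this requires the extended Weil representation on $R(\A)$ and the commutation Lemma~\ref{commutation}, which you do not invoke. The paper sidesteps this by citing \cite{HST} Lemmata 10 and 11 (with \cite{Y1}, \cite{BSP}, \cite{U98} as parallel computations), all of which already work in the similitude setting and give the explicit eigenvalues directly.

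For $\ell \mid N_1$ you propose to match Fourier coefficients via Theorem~\ref{Fourier}. The paper instead computes directly in the Weil representation: for $R_\ell$ it cites \cite{BSP} Lemma~7.3(b) after observing $D_\ell \cong D_{0,\ell}\times D_{0,\ell}$ and $X_\ell \cong R_{0,\ell}$; for $T_\ell$ it writes down the single-coset decomposition $\bigcup_B \left(\begin{smallmatrix}\ell I_n & B\\ 0 & I_n\end{smallmatrix}\right) L(N)_\ell$, applies Lemma~\ref{commutation} and \cite{HK92} Lemma~5.1.7 to evaluate $\omega\!\left(\begin{smallmatrix}\ell I_n & B\\ 0 & \ell^{-1}I_n\end{smallmatrix}\right)\varphi_\ell$, and uses $\pi R_{0,\ell}\pi^{-1}=R_{0,\ell}$ to get $L(h')\varphi_\ell(\pi x)=\ell^2\varphi_\ell(x)$ for $h'=(\pi,1)$. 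The Atkin--Lehner sign then appears directly from $\eta(hh')=\pm\eta(h)$, with no Fourier-coefficient matching needed. Your route could be made to work but is more laborious; the paper's argument stays at the level of the Schwartz function throughout.
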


\begin{proof}
For $\ell \nmid N$ and $n=2$ this can be deduced from   \cite{HST} Lemmata 10,11, where we set the auxiliary $\delta_p=+1$  to ensure local non-vanishing. For similar calculations see \cite{Y1} Theorem 5.2, \cite{BSP} Theorem 6.1, and \cite{U98} Th\'{e}or\`{e}me 3.3.5.

For $\ell \mid N_1$ we note that $\ell$ splits in $F/\Q$ so that $D_{\ell} \cong D_{0,\ell} \times D_{0,\ell}$ and $X_{\ell} \cong R_{0,\ell}$. We can therefore refer to the (local) calculation of the $R_{\ell}$ eigenvalue in \cite{BSP} Lemma 7.3 (b), after multiplying by ${\rm diag}(\ell^{-1}, \ldots, \ell^{-1}) \in Z(G(\Q))$ and observing that $\chi_{V,\ell}=1$.

We calculate the $T_{\ell}$ action by first observing that $$L(N)_{\ell} {\rm diag}(\underbrace{{\ell}, \ldots, {\ell}}_{n},\underbrace{1,\ldots, 1}_n) L(N)_{\ell}=\bigcup_B \begin{pmatrix}
  \ell I_n&B\\0&I_n\end{pmatrix}L(N)_{\ell},$$ where $B$ runs over the $\ell^{n(n+1)/2}$ representatives for the symmetric matrices modulo $\ell$.
  Let $h \in H(\A)$ with $\lambda(h)=\lambda(g)$ and $h' \in G(\Q_{\ell})$ with $\lambda(h')=\ell$. Then by definition (it suffices to check this for $g \in G(\A)^+$)
  $$\theta_{\varphi}(\eta)(g \begin{pmatrix}  \ell I_n &B\\0& I_n\end{pmatrix})= \int_{S^1_{K^{h h'}}} \eta(hh') \wedge \theta((g \begin{pmatrix}
  \ell I_n & B\\0& I_n\end{pmatrix})_1, L(h h') \tilde \varphi).$$ By \cite{HK92} Lemma 5.1.7 (a) we have $$\theta((g \begin{pmatrix}
  \ell I_n &B\\0& I_n\end{pmatrix})_1, L(h h') \tilde \varphi)= \theta(g_1,L(h) \omega((\begin{pmatrix}
  \ell I_n &B\\0& I_n\end{pmatrix})_1) L(h') \tilde \varphi).$$
By Lemma \ref{commutation} we know that $\omega((\begin{pmatrix}
  \ell I_n &B\\0& I_n\end{pmatrix})_1) L(h') =L(h') \omega(\begin{pmatrix}  \ell I_n &B\\0& \ell^{-1} I_n\end{pmatrix})$. As in the proof of \cite{BSP} Lemma 7.3 (a) we calculate $$\omega(\begin{pmatrix}  \ell I_n &B\\0& \ell^{-1} I_n\end{pmatrix}) \varphi(x)= \frac{1}{\ell^{2n}} \psi_{\ell}({\rm tr}(\ell B (x,x))) \varphi(x \begin{pmatrix}  \ell  &0\\0& \ell\end{pmatrix})= \frac{1}{\ell^{2n}} \varphi( \pi x).$$
  We identify $V \otimes \Q_{\ell} \cong D_{0,\ell}$ and note that $GSO(D_{0,\ell}) \cong (D_{0,\ell} \times D_{0,\ell})/\Q_{\ell}$, with $(h_1,h_2)$ acting by $x \mapsto h_1 x h_2^{-1}$ (see \cite{BSP} p. 60). We now assume that $h'=(\pi,1)$, by which we denote the element of $H(\Q_{\ell})$ that acts on $x \in V \otimes \Q_{\ell}$ by $x \mapsto \pi  x$, and so $L(h') \varphi(\pi x)= \ell^2 \varphi(x)$.
  Since $\pi R_{0, \ell} \pi^{-1} =R_{0,\ell}$ we have $K^{h h'}=K^h$ and hence $$\theta_{\varphi}(\eta)(g \begin{pmatrix}  \ell I_n &B\\0& I_n\end{pmatrix})= \frac{1}{\ell^{2(n-1)}} \theta_{\varphi}(\eta((\pi,1)))(g).$$
\end{proof}

\bibliographystyle{amsalpha}
\bibliography{theta}

\end{document}